\newtheorem{neu}{}[section]
\newtheorem*{Cor*}{Corollary}
\newtheorem{Thm}[neu]{Theorem}
\newtheorem*{Thm*}{Theorem}
\newtheorem{Prop}[neu]{Proposition}
\newtheorem*{Prop*}{Proposition}
\theoremstyle{definition}
\newtheorem{Lemma}[neu]{Lemma}
\newtheorem*{Rmk*}{Remark}
\newtheorem{Rmk}[neu]{Remark}
\newtheorem{Ex}[neu]{Example}
\newtheorem*{Ex*}{Example}
\newtheorem*{Qu*}{Question}
\newtheorem{Def}[neu]{Definition}
\newcommand{\N}{\mathbb{N}}
\newcommand{\Z}{\mathbb{Z}}
\newcommand{\R}{\mathbb{R}}
\newcommand{\HH}{\mathbb{H}}
\newcommand{\wrt}{with respect to }
\newcommand{\CZ}{\mu_{\mathrm{CZ}}}
\newcommand{\id}{\mathrm{id}}
\newcommand{\ind}{\mathrm{ind\,}}
\newcommand{\om}{\omega}
\newcommand{\Om}{\Omega}
\newcommand{\acs}{almost complex structure}
\newcommand{\Hess}{\mathrm{Hess}}
\newcommand{\A}{\mathcal{A}}
\renewcommand{\P}{\mathcal{P}}
\newcommand{\D}{\mathbb{D}}
\newcommand{\M}{\mathcal{M}}
\renewcommand{\L}{\mathscr{L}}
\renewcommand{\H}{\mathrm{H}}
\newcommand{\CF}{\mathrm{CF}}
\newcommand{\FH}{\mathrm{FH}}
\newcommand{\RFH}{\mathrm{RFH}}
\newcommand{\RFHb}{\mathbf{RFH}}
\newcommand{\RFCb}{\mathbf{RFC}}
\newcommand{\CM}{\mathrm{CM}}
\newcommand{\HM}{\mathrm{HM}}
\newcommand{\Crit}{\mathrm{Crit}}
\newcommand{\beq}{\begin{equation}}
\newcommand{\beqn}{\begin{equation}\nonumber}
\newcommand{\eeq}{\end{equation}}
\newcommand{\bea}{\begin{equation}\begin{aligned}}
\newcommand{\bean}{\begin{equation}\begin{aligned}\nonumber}
\newcommand{\eea}{\end{aligned}\end{equation}}
\numberwithin{equation}{section}
\definecolor{Urs}{rgb}{0,.7,0}
\definecolor{Youngjin}{rgb}{0,0,1}
\definecolor{red}{rgb}{1,0,0}
\newcommand{\p}{\partial}
\newcommand{\Mp}{\mathfrak{M}}
\newcommand{\intinf}{{\int_{-\infty}^{\infty}}}
\newcommand{\<}{{\langle}}
\newcommand{\T}{{\mathbb{T}}}       
\begin{document}
\title[Continuation homomorphism in Rabinowitz Floer homology]
{Continuation homomorphism in Rabinowitz Floer homology for symplectic deformations}
\author{Youngjin Bae}
\author{Urs Frauenfelder}
\address{
    Youngjin Bae\\
    Department of Mathematics and Research Institute of Mathematics\\
    Seoul National University}
\email{jini0919@snu.ac.kr}
\address{
    Urs Frauenfelder\\
    Department of Mathematics and Research Institute of Mathematics\\
    Seoul National University}
\email{frauenf@snu.ac.kr}
\keywords{Floer homology, Rabinowitz Floer homology, Ma\~n\'e critical value, Isoperimetric inequality}
\begin{abstract}
Will J.\,Merry computed Rabinowitz Floer homology above Ma\~n\'e's
critical value in terms of loop space homology in \cite{Mer10}  by
establishing an Abbondandolo-Schwarz short exact sequence. The
purpose of this article is to provide an alternative proof of
Merry's result. We construct a continuation homomorphism for
symplectic deformations which enables us to reduce the computation
to the untwisted case. Our construction takes advantage of a special
version of the isoperimetric inequality which above Ma\~n\'e's
critical value holds true.
\end{abstract}
\maketitle

\section{Introduction}

Rabinowitz Floer homology as introduced
in \cite{CF09} is the semi-infinite dimensional Morse homology
associated to Rabinowitz action functional. Critical points of
Rabinowitz action functional are Reeb orbits on a fixed energy
hypersurface of arbitrary period. Rabinowitz Floer homology vanishes
if the energy hypersurface is displaceable, however, we have the
following non-vanishing result.

\begin{Thm}[Abbondandolo-Schwarz \cite{AS09}, Cieliebak-Frauenfelder-Oancea \cite{CFO09}]\label{untwisted}
Assume $N$ is a closed manifold. Denote by $ST^*N$ the unit cotangent bundle of $N$
in the cotangent bundle $T^*N$ which is endowed with its canonical symplectic structure.
Then in degree $*\neq0,1$
\[
\RFHb_*(ST^*N,T^*N)=
\left\{
\begin{array}{ll}
\H_*(\L_N),&\text{if }\ *>1, \\
\H^{-*+1}(\L_N),&\text{if }\ *<0.
\end{array}
\right.
\]
If $e(T^*N)$ is the Euler characteristic of $T^*N$, then in degree 0
we have
\[
\RFH_0^c(ST^*N,T^*N)=
\left\{
\begin{array}{ll}
\H_0(\L^c_N)\oplus\H^1(\L^c_N),&\text{if }\ c\neq0, \\
\H_0(\L^0_N)\oplus\H^1(\L^0_N),&\text{if }\ c=0 \text{ and } e(T^*N)=0, \\
\H^1(\L^0_N), &\text{if }\ c=0 \text{ and } e(T^*N)\neq 0.
\end{array}
\right.
\]
In degree 1 we have
\[
\RFH_1^c(ST^*N,T^*N)=
\left\{
\begin{array}{ll}
\H_1(\L^c_N)\oplus\H^0(\L^c_N),&\text{if }\ c\neq0, \\
\H_1(\L^0_N)\oplus\H^0(\L^0_N),&\text{if }\ c=0 \text{ and } e(T^*N)=0, \\
\H_1(\L^0_N), &\text{if }\ c=0 \text{ and } e(T^*N)\neq 0.
\end{array}
\right.
\]
Here, $\L_N$ is the free loop space of $N$ and $\L^c_N$ is the
connected component of $\L_N$ of homotopy type $c$ and
$\RFH^c(ST^*N,T^*N)$ is the Rabinowitz Floer homology for the
Rabinowitz action functional restricted to $\L^c_{T^*N}$. Moreover,
all homology groups are taken with $\mathbb{Z}_2$-coefficients.
\end{Thm}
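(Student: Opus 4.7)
The plan is to combine two principal ingredients: on one hand the long exact sequence of Cieliebak-Frauenfelder-Oancea relating the Rabinowitz Floer homology of a bounding contact hypersurface $(\Sigma,W)$ to the symplectic (co)homology of its filling, and on the other hand the Abbondandolo-Schwarz-Viterbo isomorphism identifying symplectic (co)homology of a cotangent bundle with loop space (co)homology. First I would set up the exact sequence
\[
\cdots \to \SH^{-*+1}(T^*N) \to \SH_*(T^*N) \to \RFHb_*(ST^*N,T^*N) \to \SH^{-*}(T^*N) \to \cdots,
\]
which arises from the fact that the critical set of the Rabinowitz action functional splits according to the sign of the period: positive Reeb orbits contribute a subcomplex computing symplectic homology, negative Reeb orbits contribute symplectic cohomology, and the two are glued along the Morse-Bott manifold of constants.

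Second, I would invoke the Abbondandolo-Schwarz theorem $\SH_*(T^*N) \cong \H_*(\L_N)$ and its dual version $\SH^*(T^*N) \cong \H^*(\L_N)$; working with $\Z_2$-coefficients as in the statement removes any orientation twist. After decomposing both sides by free homotopy class, the problem reduces to computing the connecting homomorphism
\[
\H^{-*+1}(\L^c_N) \longrightarrow \H_*(\L^c_N)
\]
on each component. Since this connecting map comes from counting Floer cylinders between positive- and negative-action critical points that pass through the constant-loop locus, the only non-trivial contributions are carried by the zero section. Thus for $c\neq 0$ the map vanishes identically and $\RFH^c_*$ is the direct sum, while for $c=0$ the map is concentrated in the bidegrees where both $\H^*(\L^0_N)$ and $\H_*(\L^0_N)$ detect constant loops, namely degrees $0$ and $1$ of Rabinowitz homology.

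The final step, and the main obstacle, is to identify the connecting map on constant loops with a classical topological operation on $N$. Under the Abbondandolo-Schwarz/$\PSS$ comparison the constant-loop piece matches the Morse homology of an auxiliary function on $N$, and the Rabinowitz connecting map matches cap product with the Euler class $e(T^*N) \in \H^{\dim N}(N;\Z_2)$. When $e(T^*N) = 0$ this cap product vanishes and the sequence splits into a direct sum in degrees $0$ and $1$ as well; when $e(T^*N) \neq 0$ the map $\H^0(N) \to \H_0(N)$ (suitably shifted) is an isomorphism onto its image, killing precisely one copy of $\H_0$ in Rabinowitz degree $0$ and one copy of $\H^0$ in Rabinowitz degree $1$, which is exactly the case distinction in the statement. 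The delicate part is verifying this Euler-class interpretation; it requires an explicit Morse-Bott model for the Rabinowitz complex near the zero section and careful bookkeeping of the $\PSS$-type comparison between Floer- and Morse-theoretic connecting maps.
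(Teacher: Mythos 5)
This paper does not prove Theorem~\ref{untwisted}; it cites it from \cite{AS09} and \cite{CFO09} and uses it as a black-box input, so there is no in-paper argument to compare against. Your sketch reconstructs the route of \cite{CFO09}: the long exact sequence relating $\RFHb_*(\partial V,V)$ to $\SH_*(V)$ and the dual $\SH^{-*}(V)$, the Abbondandolo--Schwarz/Viterbo isomorphism $\SH_*(T^*N)\cong\H_*(\L_N)$, and the identification of the connecting homomorphism on the constant-loop component with cap product against the Euler class of $T^*N$. That is indeed where the dichotomy on $e(T^*N)$ in degrees $0$ and $1$ originates, and your observation that the connecting map dies on nontrivial free homotopy classes (where there are no constants) is the correct reason for the unconditional direct-sum formula when $c\neq 0$. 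By contrast, \cite{AS09} obtains the same computation by directly constructing the Abbondandolo--Schwarz short exact sequence for the Rabinowitz complex itself, without passing through symplectic homology; this is precisely the tool Merry reuses for the twisted statement in \cite{Mer10}. The step you flag as delicate, the Euler-class identification of the connecting map, is genuinely the technical heart of \cite{CFO09} and is not a formality: it requires the explicit Morse--Bott model near the zero section and a careful matching of grading conventions (the special degrees $0,1$ are exactly where $\H_*(\L^0_N)$ and $\H^{-*+1}(\L^0_N)$ can simultaneously be nonzero). In the present paper that work is simply inherited from the references, and the paper's own contribution is to deduce the twisted Theorem~\ref{twisted} from the untwisted Theorem~\ref{untwisted} via the continuation isomorphism of Theorem~\ref{thm:rfhcon}.
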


An interesting result of Will J. Merry tells us that this theorem
continuous to hold in the presence of a weakly exact magnetic field
for high enough energy levels. On the cotangent bundle $\tau:T^*N\to
N$ of a closed Riemannian manifold $(N,g)$, we consider an
autonomous Hamiltonian system defined by a convex Hamiltonian \beqn
H_U(q,p)=\frac{1}{2}|p|^2+U(q) \eeq and a twisted symplectic form
\beqn \om_{\sigma}=\om_0+\tau^*\sigma. \eeq Here $\om_0=dp\wedge dq$
is the canonical symplectic form in canonical coordinates $(q,p)$ on
$T^*N$, $|p|$ denotes the dual norm of a Riemannian metric $g$ on
$N$, $U:N\to\R$ is a smooth potential, and $\sigma$ is a closed
2-form on $N$. This Hamiltonian system describes the motion of a
particle on $N$ subject to the conservative force $-\nabla U$ and
the magnetic field $\sigma$. We call the symplectic manifold
$(T^*N,\om_{\sigma})$ a {\em twisted cotangent bundle}.

In order to state Will J. Merry's results we need the term of {\em
Ma\~n\'e critical value}. Let $(\widetilde{N},\widetilde g)$ be the
universal cover of $(N,g)$. Let $\sigma\in\Om^2(N)$ denote a closed
{\em weakly exact} 2-form, which means that the pullback
$\widetilde\sigma\in\Om^2(\widetilde N)$ is exact.

\begin{Def}
Let $\sigma\in\Om^2(N)$ be a closed weakly exact 2-form.
Then the {\em Ma\~n\'e critical value} is defined as
\[
c=c(g,\sigma,U):=\inf_{\theta\in\P_\sigma}\sup_{q\in\widetilde{N}}\widetilde{H}_U(q,\theta_q),
\]
where $\P_{\sigma}=\{\theta\in\Om^1(\widetilde{N})\;
|\;d\theta=\widetilde\sigma\}$ and $\widetilde{H}_U$ is the lift of
$H_U$ to the universal cover.
\end{Def}

In this article, we restrict our attention to the case of $c<\infty$
i.e. $\widetilde{\sigma}\in\Om^2(\widetilde{N})$ admits a bounded
primitive. For given $k\in\R$, we let $\Sigma_k:=H^{-1}_U(k)\subset
T^*N$. Then the dynamics of the hypersurface $\Sigma_k$ changes
dramatically when $k$ is passing through $c$. If $k>c$ then
$\Sigma_k$ is {\em virtual restricted contact}, and Rabinowitz Floer
homology is well-defined. All these things are investigated in
\cite{CFP09}. The following theorem was conjectured in \cite{CFP09}
and proved in \cite{Mer10} by using {\em the Abbondandolo-Schwarz
short exact sequence}.

\begin{Thm}[Merry \cite{Mer10}]\label{twisted}
Under the above assumptions if $k>c(g,\sigma,U)$, then in degree
$*\neq 0,1$
\[
\RFHb_*(\Sigma_k,T^*N,\om_\sigma)=
\left\{
\begin{array}{ll}
\H_*(\L_N),&\text{if }\ *>1, \\
\H^{-*+1}(\L_N),&\text{if }\ *<0.
\end{array}
\right.
\]
In degree $0,1$ we have the same result as in Theorem
\ref{untwisted}.
\end{Thm}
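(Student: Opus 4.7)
The plan is to reduce Theorem~\ref{twisted} to Theorem~\ref{untwisted} via a continuation argument along the one-parameter family of symplectic forms
\[
\om_s = \om_0 + s\,\tau^*\sigma, \qquad s\in[0,1],
\]
interpolating between the canonical and the twisted structure. For every $s\in[0,1]$ the hypersurface $\Sigma_k$ is still defined by the same convex Hamiltonian $H_U$, and since $k>c(g,\sigma,U)\geq c(g,s\sigma,U)$ the results of \cite{CFP09} guarantee that $(\Sigma_k,\om_s)$ remains virtually restricted contact. Hence Rabinowitz Floer homology $\RFHb_*(\Sigma_k,T^*N,\om_s)$ is well-defined for every $s$, and at $s=0$ it is computed by Theorem~\ref{untwisted}.

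To compare the groups for different values of the deformation parameter, I would construct the Rabinowitz action functional on the universal cover $\widetilde{T^*N}$ (on which $\widetilde\sigma=d\theta$ for a bounded primitive $\theta\in\L_\sigma$) and define a continuation homomorphism
\[
\Phi \colon \RFC_*(\Sigma_k,T^*N,\om_1)\pf \RFC_*(\Sigma_k,T^*N,\om_0)
\]
by counting solutions of the Floer equation associated to a smooth $s$-dependent interpolation $\om_{\beta(s)}$ with $\beta\colon\R\to[0,1]$ a cutoff function. A homotopy-of-homotopies argument then yields that $\Phi$ is a chain map and that its composition with the backward continuation homomorphism is chain-homotopic to the identity, so that $\Phi$ descends to an isomorphism on homology.

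The core analytic difficulty, and the step on which everything hinges, is to establish uniform $L^\infty$ bounds on the continuation trajectories. Since $\sigma$ is only weakly exact, the naive action/energy estimate picks up a topological term $\int u^*\tau^*\sigma$ that is not controlled by the action difference at the ends. The remedy, and the reason the deformation only works above the Ma\~n\'e critical value, is the special isoperimetric inequality that holds for $k>c(g,\sigma,U)$: the symplectic area of a disc in $\widetilde{T^*N}$ whose boundary lies close to $\Sigma_k$ can be bounded by a constant multiple of the length of the boundary plus an error controlled by $k-c$. Together with the standard argument bounding the Lagrange multiplier by the action, this isoperimetric inequality converts the action bound coming from the two ends into an $L^2$-bound on the derivative, and then, by a bubbling-off analysis adapted to the twisted setting, into the required $C^0$-bound on the trajectories.

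Once compactness of continuation moduli spaces is in place, the definition of $\Phi$ and the verification that it is an isomorphism follow the standard Floer-theoretic scheme, the only novelty being that all estimates have to be carried out uniformly in the deformation parameter $s\in[0,1]$. Combining the resulting isomorphism
\[
\RFHb_*(\Sigma_k,T^*N,\om_\sigma)\cong \RFHb_*(\Sigma_k,T^*N,\om_0)
\]
with Theorem~\ref{untwisted} (applied to the convex hypersurface $\Sigma_k\subset(T^*N,\om_0)$, which is fibrewise starshaped and hence yields the same homology as $ST^*N$ by a further standard continuation) produces the claimed identification with loop space (co)homology and completes the proof.
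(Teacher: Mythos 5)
Your general plan — reduce to Theorem~\ref{untwisted} by constructing a continuation isomorphism along $\om_s=\om_0+s\tau^*\sigma$, with the isoperimetric inequality above the Ma\~n\'e critical value supplying the missing energy control — is exactly the paper's strategy, and you correctly identify the Lagrange-multiplier bound as the other pillar. However, there is a concrete gap: as written, a single continuation from $\om_0$ to $\om_\sigma$ does not produce the needed a priori bounds. The isoperimetric inequality gives an estimate of the schematic form
\begin{equation*}
E(w)\ \leq\ \A_{\om_1}(w_+)-\A_{\om_0}(w_-)\ +\ 2C\,E(w)\ +\ (\text{terms linear in }\|\eta\|_\infty),
\end{equation*}
and the analogous Lagrange-multiplier estimate $\|\eta\|_\infty\leq K'C\|\eta\|_\infty+K''$ from Proposition~\ref{prop:lagbd2}. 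These can only be rearranged to give bounds when the isoperimetric constant $C$ is small (the paper requires $C\leq\min\{1/4,1/(2K')\}$ and the still more stringent conditions \eqref{eqn:Ccond2}). For an arbitrary admissible $\sigma$ there is no reason for $C(N,g,\sigma,v_\nu)$ to be that small, so the bootstrap does not close and compactness of the continuation moduli spaces fails. The paper's essential device, which your proposal omits, is the subdivision of the deformation into small \emph{adiabatic} steps $\om^i=\om_0+d(i)\tau^*\sigma$ with $d(i+1)-d(i)$ small enough that (Remark~\ref{rmk:conti_const}) the isoperimetric constant of $(d(i+1)-d(i))\sigma$ is below the required threshold; the full continuation map $\widetilde\Psi_{\om_0}^{\om_\sigma}$ is then the composition of these small-step maps. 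Without this subdivision your argument is incomplete.

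Two secondary points. First, the relevant isoperimetric inequality here is the \emph{linear} one of Lemma~\ref{lem:linhmtp}, $\int_D\overline q^*\sigma\leq C\bigl(\int_0^1|\p_t q|\,dt+1\bigr)$, whose constant depends on $\|\theta\|_\infty$ (hence on the existence of a bounded primitive, i.e. $c<\infty$), not directly on $k-c$ as your description suggests. Second, even after uniform bounds are available, the continuation maps naturally go between shrinking action windows, $\RFCb^{(a/2,b)}(\Sigma_k,\om^i)\to\RFCb^{(a,b/2)}(\Sigma_k,\om^{i+1})$ (Lemma~\ref{lem:awin}), and one must pass carefully to the inverse/direct limits to obtain the map on full $\RFHb$; this is another nontrivial step that a complete proof needs to address.
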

The aim of this article is to give an alternative proof of the above
theorem by constructing an explicit isomorphism between
$\RFHb(\Sigma_k,T^*N,\omega_0)$ and
$\RFHb(\Sigma_k,T^*N,\omega_\sigma)$ and then use the untwisted
version, namely Theorem~\ref{untwisted}. The explicit isomorphism is
given by the continuation homomorphism for the symplectic
deformation $r \mapsto \omega_{r\sigma}$ with $r \in [0,1]$. For the
following theorem note that $c(g,\sigma,U)\geq c(g,0,U)=\max U$.
Hence if $k>c(g,\sigma,U)$, then the Rabinowitz Floer homology for
$\Sigma_k$ is defined and coincides with the one from
Theorem~\ref{untwisted}.

\begin{Thm}\label{thm:rfhcon}
Under the above assumptions, if $k>c(g,\sigma,U)\geq c(g,0,U)=\max
U$ and $\om_0,\om_{\sigma}\in\Om^{\Mp}_{\rm reg}(\Sigma_k)$ then
there is a continuation map
\[
\Psi_{\om_0 *}^{\om_{\sigma}}:\RFCb_*(\Sigma_k,\om_0)\to\RFCb_*(\Sigma_k,\om_{\sigma})
\]
which induces an isomorphism
\[
\widetilde{\Psi_{\om_0}^{\om_{\sigma}}}_*:\RFHb_*(\Sigma_k,\om_0)\to\RFHb_*(\Sigma_k,\om_{\sigma}).
\]
\end{Thm}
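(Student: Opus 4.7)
The plan is to imitate the standard Floer-theoretic continuation argument for the one-parameter symplectic deformation $r \mapsto \om_{r\sigma}$, $r\in[0,1]$. Concretely, I fix a smooth monotone cutoff $\beta:\R\to[0,1]$ with $\beta(s)=0$ for $s\le -1$ and $\beta(s)=1$ for $s\ge 1$, set $\om_s:=\om_{\beta(s)\sigma}$, and choose a homotopy $\{J_s\}_{s\in\R}$ of $\om_s$-compatible \acs s joining regular choices for $\om_0$ and $\om_\sigma$. For matching pairs $c_\pm$ of critical points of the Rabinowitz action functionals at $s=\pm\infty$ I consider the moduli space of finite-energy solutions $(x(s,\cdot),\eta(s))$ of the $s$-dependent $L^2$-negative gradient flow equation, and the signed count of rigid elements defines the candidate chain map $\Psi_{\om_0 *}^{\om_\sigma}$.

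The core analytic task is compactness of these parametrized moduli spaces, which as always in Rabinowitz Floer theory reduces to obtaining uniform bounds along a continuation trajectory on (i) the action, (ii) the Lagrange multiplier $\eta(s)$, and (iii) the $L^\infty$-norm of the loop in the fibre direction. The novelty compared to the standard case is that the energy identity now carries an extra contribution whose integrand involves the rate $\beta'(s)$ times the magnetic form $\tau^*\sigma$ paired with the continuation trajectory, and which a priori has no sign. The assumption $k>c(g,\sigma,U)$ is precisely what allows one to choose a bounded primitive $\vartheta$ of $\widetilde\sigma$ on the universal cover of $N$, so that the special isoperimetric inequality valid above the Ma\~n\'e critical value (cf.~\cite{CFP09}) bounds the extra term by a controlled multiple of the loop length and hence by an expression that can be absorbed back into the action estimate itself. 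Combined with the virtually restricted contact property of $\Sigma_k$, this yields the bound on $\eta$; the $C^0$-bound on $x$ then follows from the usual convexity/maximum principle for $\om_s$-holomorphic curves.

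With the chain map established, I construct a reverse continuation $\Psi_{\om_\sigma *}^{\om_0}$ using the reflected cutoff $s\mapsto\beta(-s)$, under the very same estimates since every symplectic form $\om_{r\sigma}$ in the deformation remains Ma\~n\'e-supercritical at the fixed level $k$. Standard homotopy-of-homotopies arguments (interpolating a concatenation of two cutoffs to a constant cutoff) then produce chain homotopies identifying both compositions $\Psi_{\om_\sigma}^{\om_0}\circ\Psi_{\om_0}^{\om_\sigma}$ and $\Psi_{\om_0}^{\om_\sigma}\circ\Psi_{\om_\sigma}^{\om_0}$ with the respective identity maps up to boundary, which upgrades $\Psi_{\om_0 *}^{\om_\sigma}$ to the isomorphism $\widetilde{\Psi_{\om_0}^{\om_\sigma}}_*$ on homology.

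The hard part will be the uniform $L^\infty$-bound on the Lagrange multiplier in the presence of the moving symplectic form: the classical Rabinowitz compactness scheme must be adapted to absorb the $\beta'$-term, and it is exactly the special isoperimetric inequality above the Ma\~n\'e critical value advertised in the abstract that makes this possible. Everything else is a parametrized reprise of well-established Floer-theoretic machinery.
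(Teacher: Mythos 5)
Your high-level strategy --- count solutions of the $s$-dependent gradient flow equation, control the extra $\beta'(s)$-term via a linear isoperimetric inequality valid above the Ma\~n\'e critical value, then use a reflected homotopy and a homotopy-of-homotopies argument --- is exactly the skeleton of the paper's proof. However, there is a genuine gap in the middle: you run the continuation in a \emph{single} step from $\om_0$ to $\om_\sigma$. The energy and $\eta$-estimates in the paper (Proposition~\ref{prop:lagbd2} and the inequality~(\ref{eqn:e_bd})) produce a bound of the schematic form $E(w)\le \Delta + C\cdot E(w) + C\cdot\|\eta\|_\infty + \cdots$, where $C$ is the isoperimetric constant associated to the \emph{increment} of symplectic form over the homotopy interval. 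To absorb the $C\cdot E(w)$ and $C\cdot\|\eta\|_\infty$ back into the left-hand side you need $C$ strictly less than explicit numerical thresholds (the paper requires $C\le\min\{1/4,1/(2K')\}$ and the further conditions~(\ref{eqn:Ccond2}) for the action-window Lemma~\ref{lem:awin}). Merely saying the extra term is ``a controlled multiple of the loop length\ldots absorbed back into the action estimate'' does not work for a fixed $\sigma$, since $C(M,g,\sigma)$ has no reason to be small. The paper handles this by subdividing $\om_s$ into $N$ short pieces $\om^i=\om_0+d(i)\tau^*\sigma$ chosen so that each increment $(d(i+1)-d(i))\sigma$ does have a sufficiently small isoperimetric constant (Remark~\ref{rmk:conti_const}), builds a continuation map $\widetilde\Psi_{\om^i}^{\om^{i+1}}$ for each adjacent pair on truncated action windows, and then takes the appropriate limits. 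This adiabatic subdivision is precisely the point the paper flags in its discussion of symplectic versus Hamiltonian deformations, and it is the step your proposal omits.

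Two further, lesser omissions worth noting: the Rabinowitz action functional here is Morse--Bott rather than Morse, so the continuation map is built from counting flow lines with cascades (as in Section~\ref{sec:MBh}) rather than rigid solutions of a simple Floer equation with signs; and the control coming from Lemma~\ref{lem:awin} only gives continuation maps between \emph{truncated} complexes $\RFCb^{(a/2,b)}\to\RFCb^{(a,b/2)}$, which must then be assembled via the inverse/direct limit defining $\RFHb$. Neither is fatal to your outline, but both are needed to make the construction honest.
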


One of our motivation for considering an alternative proof of
Merry's result is that the continuation homomorphism can be used to
compare spectral invariants between two different magnetic fields,
we refer to \cite{AF10} for a discussion of spectral invariants in
Rabinowitz Floer homology. We plan to discuss this in more detail in
a further paper.

The question of invariance under symplectic perturbation is also an
important issue in symplectic homology, we refer to the paper by
Ritter \cite{Rit}. In view of the long exact sequence between
symplectic homology and Rabinowitz Floer homology established in
\cite{CFO09} we expect interesting interactions of this paper with
the approach followed by Ritter.
\\ \\
\emph{Acknowledgement: } The authors were supported by the Basic
research fund 2010-0007669 funded by the Korean government.
We thank to J.-C. Sikorav for helpful comments
including Example \ref{ex:sol}.

\section{Continuation homomorphism in Morse and Floer homology}

\subsection{Morse homology}
Let $(M,g)$ be a closed Riemannian manifold and $f:M\to\R$ a Morse
function. We recall that the Morse chain complex $\CM_*(f)$ is the
graded $\Z_2$-vector space generated by the set $\Crit(f)$ of
critical points of $f$. The grading is given by the Morse index
$\mu=\mu_{\rm Morse}$ of $f$. The boundary operator
\[
\p:\CM_*(f)\to\CM_{*-1}(f)
\]
is defined on generators by counting gradient flow lines.
Indeed assume that a Riemannian metric $g$ on $M$ satisfies the following transversality condition.
Stable and unstable manifolds \wrt the negative gradient flow of $\nabla f=\nabla^g f$ intersect transversally,
that is, $W^s(x)\pitchfork W^u(y)$ for all $x,y\in\Crit(f)$.
Then the moduli space
\[
\widehat{\M}(x_-,x_+):=\{x:\R\to M\ |\ \p_sx(s)+\nabla f(x(s))=0,\ \lim_{s\to\pm\infty}x(s)=x_{\pm}\}
\]
is a smooth manifold of dimension $\dim\widehat{\M}(x_-,x_+)=\mu(x_-)-\mu(x_+)$.
Moreover, $\R$ acts by shifting the $s$-coordinate.
If $x_-\neq x_+$, the action is free and we denote the quotient by
\[
\M(x_-,x_+):=\widehat{\M}(x_-,x_+)/\R.
\]
Moreover, if $\mu(x_-)-\mu(x_+)=1$ then $\M(x_-,x_+)$ is a finite set.
Then we can define the differential $\p=\p(f,g)$ as a linear map which is given on generators by
\[
\p x_-:=\sum_{x_+\in\Crit(f)\atop\mu(x_-)-\mu(x_+)=1}\#_2\M(x_-,x_+)x_+,
\]
where, $\#_2$ denotes the count of a set modulo 2. It is a deep
theorem in Morse homology that the identity
\[
\p\circ\p=0
\]
holds, see \cite{Sch93} for details. Then
\[
\HM_*(f,g):=\H_*(\CM_\bullet(f),\p(f,g))
\]
is the Morse homology for the pair $(f,g)$.
Moreover, $\HM_*(f,g)$ equals the singular homology $\H_*(M)$ of $M$.
In particular, $\HM(f,g)$ is independent of the choice of Morse-Smale pair $(f,g)$.

The independence of $\HM(f,g)$ of Morse-Smale pair $(f,g)$ can be shown directly using the continuation homomorphism
which is constructed in the following way.
For two Morse-Smale pairs $(f_\pm,g_\pm)$ we choose $T>0$ and a smooth family $\{f_s,g_s\}_{s\in\R}$
of functions $f_s:M\to\R$ and Riemannian metrics $g_s$ such that
\bean
f_s=
\left\{
\begin{array}{l}
f_-\  \text{for} \ s\leq -T \\
f_+\  \text{for} \ s\geq T
\end{array}
\right.
\qquad
g_s=
\left\{
\begin{array}{l}
g_-\  \text{for} \ s\leq -T \\
g_+\  \text{for} \ s\geq T .
\end{array}
\right.
\eea
For critical points $x_\pm\in\Crit(f_\pm)$, we consider the moduli space
\[
\mathcal N(x_-,x_+)=\mathcal N(x_-,x_+;f_s,g_s):=\{x:\R\to M\ |\ \p_sx(s)+\nabla^{g_s}f_s(x(s))=0,\ \lim_{s\to\pm\infty}x(s)=x_\pm\}.
\]
A homotopy $(f_s,g_s)$ is called regular if the Fredholm operator
obtained by linearizing the gradient flow equation is onto. In
particular, for a regular homotopy the moduli space $\mathcal
N(x_-,x_+)$ is a smooth manifold of dimension $\dim\mathcal
N(x_-,x_+)=\mu(x_-)-\mu(x_+)$. A generic homotopy is regular.
Moreover, in the special case $f_s=f_-=f_+$ and $g_s=g_-=g_+$ we
have the identity \beq\label{eqn:NM} \mathcal
N(x_-,x_+)=\widehat{\M}(x_-,x_+). \eeq If $\mu(x_-)-\mu(x_+)=0$ the
space $\mathcal N(x_-,x_+)$ is compact. In order to verify that we
need to prove a uniform energy bound of $x\in\mathcal N(x_-,x_+)$ as
follows \bea\label{eqn:Meb}
E(x)&=E_{g_s}(x)=\intinf\|\p_s x(s)\|_{g_s}^2ds \\
&=-\intinf\langle\nabla_{g_s}f_s(x(s)),\p_s x(s)\rangle_{g_s}ds \\
&=-\intinf df_s(x(s))\p_s xds \\
&=-\intinf\frac{d}{ds}f_s(x(s))ds+\intinf\dot f_s(x(s))ds \\
&\leq\|f_-\|_{\infty}+\|f_+\|_{\infty}+2T\|\dot f_s\|_{\infty}.
\eea

Then we define a linear map
\bean
Z=Z(f_s,g_s):\CM_*(f_-)&\to \CM_*(f_+) \\
x_-&\mapsto\sum_{x_+\in\Crit(f_+)\atop\mu(x_-)=\mu(x_+)}\#_2\mathcal N(x_-,x_+)x_+.
\eea
We denote $\p_\pm:=\p(f_\pm,g_\pm)$.
In the same manner as $\p\circ\p=0$, one proves in Morse homology
\[
Z\circ\p_-=\p_+\circ Z,
\]
see \cite{Sch93}. In particular, on homology level we obtain the map
\[
\widetilde Z:\HM_*(f_-,g_-)\to\HM_*(f_+,g_+)
\]
which is called the continuation homomorphism.
By a homotopy-of-homotopies argument, it is proved that
$\widetilde Z$ is independent of the chosen homotopy $(f_s,g_s)$, see \cite{Sch93}.
Moreover, the continuation homomorphism is functorial in the following sense.
If we fix three Morse-Smale pairs $(f_a,g_a),\ (f_b,g_b)$, and $(f_c,g_c)$
we denote the corresponding continuation homomorphisms by $\widetilde Z_a^b:\HM_*(f_a,g_a)\to\HM_*(f_b,g_b)$
and similarly $\widetilde Z_a^c$ and $\widetilde Z_b^c$.
Then we have the following identity
\[
\widetilde Z_a^c=\widetilde Z_b^c\circ\widetilde Z_a^b.
\]
Now consider the case $f_s=f_a$ and $g_s=g_a$. By (\ref{eqn:NM}), we
get $\#_2\mathcal N(x_-,x_+)=1$ if $x_-=x_+$ and $\#_2\mathcal
N(x_-,x_+)=0$ for the other cases. Hence we obtain the following
identity
\[
\widetilde Z_a^a=\id_{\HM_*(f_a,g_a)}.
\]
In particular, we conclude that $\widetilde Z_a^b$ is an isomorphism with inverse $\widetilde Z_b^a$.

\subsection{Morse-Bott homology}\label{sec:MBh}

Let $M$ be a compact manifold and $(f,h,g,g^0)$ be a {\em Morse-Bott
quadruple}. The Morse-Bott quadruple consists of a Morse-Bott
function $f$ on M, a Morse function $h$ on $\Crit(f)$, a Riemannian
metric $g$ on $M$ and a Riemannian metric $g^0$ on $\Crit(f)$. We
assume that $(h,g^0)$ satisfies the Morse-Smale condition, i.e.
stable and unstable manifolds intersect transversally. For a
critical point $c$ on $h$, let $\ind_f(c)$ be the number of negative
eigenvalues of $\Hess(f)(c)$ and $\ind_h(c)$ be the number of
negative eigenvalues of $\Hess(h)(c)$. We define
\[
\ind(c):=\ind_{f,h}(c):=\ind_f(c)+\ind_h(c).
\]

\begin{Def}
For $c_1,c_2\in\Crit(h)$, and $m\in\N$ a {\em flow line from $c_1$
to $c_2$ with $m$ cascades}
\[
(x,T)=((x_k)_{1\leq k\leq m},(t_k)_{1\leq k \leq m-1})
\]
consist of $x_k\in C^\infty(\R,M)$ and $t_k\in\R_\geq:=\{r\in\R:r\geq0\}$ which satisfy the following conditions:
\begin{enumerate}
\item $x_k\in C^\infty(\R,M)$ are nonconstant solutions of
\[
\dot x_k=-\nabla f(x_k).
\]

\item There exists $p\in W^u_h(c_1)$ and $q\in W^s_h(c_2)$ such that
\[
\lim_{s\to-\infty}x_1(s)=p \text{ and } \lim_{s\to\infty}x_m(s)=q.
\]
\item For $1\leq k\leq m-1$ there are Morse flow lines $y_k\in C^\infty(\R,\Crit(f))$ of $h$,
i.e. solutions of
\[
\dot y_k=-\nabla h(y_k),
\]
such that
\[
\lim_{s\to\infty}x_k(s)=y_k(0),\qquad \lim_{s\to-\infty}x_{k+1}(s)=y_k(t_k).
\]
\end{enumerate}
A \emph{flow line with zero cascades} is just an ordinary Morse flow
line from $c_1$ to $c_2$.
\end{Def}

\begin{figure}
\centering
\includegraphics[width=0.45\textwidth]{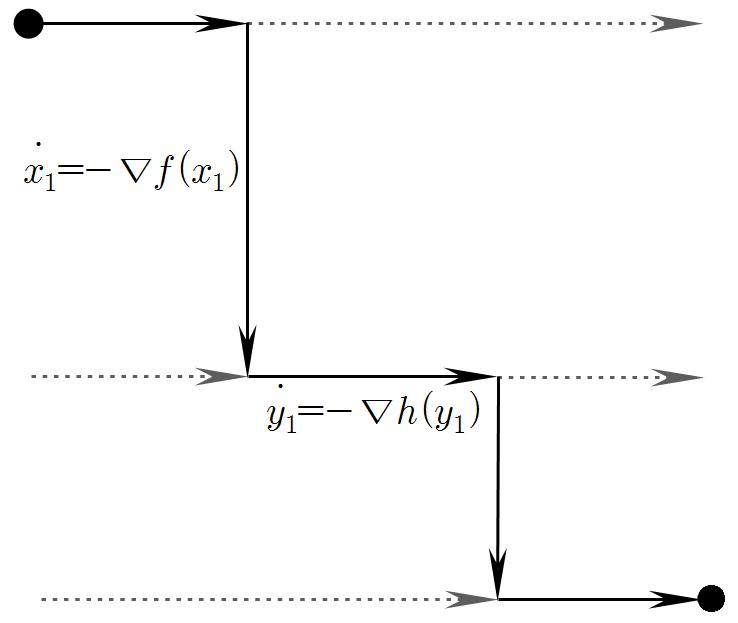}
\caption{A flow line with cascades}
\end{figure}

We denote the space of flow lines with $m$ cascades from $c_1$ to $c_2\in\Crit(h)$ by
\[
\widetilde\M_m(c_1,c_2).
\]
If $m\geq1$ then the group $\R^m$ acts on $\widetilde\M_m(c_1,c_2)$ by time shift on each cascade, i.e.
\[
x_k(s)\mapsto x_k(s+s_k).
\]
In the case of zero cascades $m=0$, the group $\R$ still acts on $\widetilde\M_0(c_1,c_2)$ by time shifting.
We denote the quotient by
\[
\M_m(c_1,c_2).
\]
We define the {\em set of flow lines with cascades from $c_1$ to $c_2$} by
\[
\M(c_1,c_2):=\bigcup_{m\in\N_0}\M_m(c_1,c_2).
\]

For a pair $(f,h)$ consisting of a Morse-Bott function $f$ on $M$ and
a Morse function $h$ on $\Crit(f)$, we define the chain complex $\CM_*(f,h)$
as the $\Z_2$-vector space generated by the critical points of $h$ graded by the index.
More precisely, $\CM_k(f,h)$ are formal sums of the form
\[
\xi=\sum_{c\in\Crit(h)\atop\ind(c)=k}\xi_cc
\]
with $\xi_c\in\Z_2$. For generic choice of the Riemannian metric $g$ on $M$,
the moduli spaces of flow lines with
cascades $\M(c_1,c_2)$ is a smooth manifold of dimension
\[
\dim\M(c_1,c_2)=\ind(c_1)-\ind(c_2)-1.
\]
If $\dim\M(c_1,c_2)=0$, then $\M(c_1,c_2)$ is finite. We define the boundary operator
\[
\p_k:\CM_k(f,h)\to\CM_{k-1}(f,h)
\]
as the linear extension of
\[
\p_kc=\sum_{\ind(c')=k-1}\#_2\M(c,c')c'
\]
for $c\in\Crit(h)$ with $\ind(c)=k$. The usual gluing and compactness arguments imply that
\[
\p\circ\p=0.
\]
This defines homology groups
\[
\HM_*(f,h,g,g^0):=\H_*(\CM_\bullet(f,h),\p(f,h,g,g^0)).
\]

In the Morse-Bott situation, we can also show that the Morse-Bott
homology is independent of the choice of the Morse-Bott quadruple.
First take two regular quadruples $(f_-,h_-,g_-,g^0_-)$ and
$(f_+,h_+,g_+,g^0_+)$. Choose a smooth family of interpolations
$\{(f_s,g_s)\}_{s\in\R}$ such that \bean f_s= \left\{
\begin{array}{l}
f_-\  \text{for} \ s\leq -T \\
f_+\  \text{for} \ s\geq T
\end{array}
\right.
\qquad
g_s=
\left\{
\begin{array}{l}
g_-\  \text{for} \ s\leq -T \\
g_+\  \text{for} \ s\geq T .
\end{array}
\right.
\eea

For $c_1\in\Crit(h_-)$, $c_2\in\Crit(h_+)$,
we consider the following flow lines from $c_1$ to $c_2$ with $m$ cascades
\[
(x,T)=((x_k)_{1\leq k\leq m},(t_k)_{1\leq k\leq m-1})
\]
for $x_k\in C^\infty(\R,M)$ and $t_k\in\R_\geq$ which satisfy the following conditions:
\begin{enumerate}
\item $x_k$ are solutions of
\[
\dot x_k(s)=-\nabla_{\widetilde g_{k}}\widetilde f_k(x_k),
\]
where for some $m_1 \in \{1,\ldots,m\}$ \beqn \widetilde f_k=
\left\{
\begin{array}{ccc}
f_-&\text{ for }&1\leq k\leq m_1-1 \\
f_s&\text{ for }& k=m_1 \\
f_+&\text{ for }& m_1+1\leq k\leq m
\end{array}
\right.
\eeq
and
\beqn
\widetilde g_k=
\left\{
\begin{array}{ccc}
g_-&\text{ for }&1\leq k\leq m_1-1 \\
g_s&\text{ for }& k=m_1 \\
g_+&\text{ for }& m_1+1\leq k\leq m.
\end{array}
\right. \eeq Moreover, for $k \neq m_1$ the cascade $x_k$ is
nonconstant.

\item There exists $p_1\in W^u_{h_-}(c_1)$ and $p_2\in W^s_{h_+}(c_2)$ such that
$\lim_{s\to-\infty}x_1(s)=p_1$ and $\lim_{s\to\infty}x_m(s)=p_2$.

\item
For $1\leq k\leq m-1$, $y_k$ are Morse flow lines of $\widetilde h$, i.e. solutions of
\[
\dot y_k(s)=-\nabla_{\widetilde g^0_{k}}\widetilde h_k(y_k),
\]
and
\[
\lim_{s\to\infty}x_k(s)=y_k(0),\qquad \lim_{s\to-\infty}x_{k+1}(s)=y_k(t_k)
\]
where
\beqn
\widetilde h_k=
\left\{
\begin{array}{ccc}
h_-&\text{ for }&1\leq k\leq m_1-1 \\
h_+&\text{ for }& m_1\leq k\leq m-1
\end{array}
\right.
\eeq
and
\beqn
\widetilde g^0_k=
\left\{
\begin{array}{ccc}
g^0_-&\text{ for }&1\leq k\leq m_1-1 \\
g^0_+&\text{ for }& m_1\leq k\leq m-1.
\end{array}
\right.
\eeq
\end{enumerate}

For a generic choice of the data, the space of solutions of (1) to
(3) is a smooth manifold whose dimension is given by the difference
of the indices of $c_1$ and $c_2$. If $\ind(c_1)=\ind(c_2)$ then
this manifold is compact. In order to verify this we need to prove a
uniform energy bound of time-dependent cascades as in the Morse
case. Since a cascade consists of several negative gradient flow
lines $(x_k)_{1\leq k\leq m},(y_k)_{1\leq k\leq m-1}$, it suffices
to show that the energy of each (time-dependent) gradient flow line
are uniformly bounded. This is guaranteed by the argument of
(\ref{eqn:Meb}) in the Morse situation.

We define a map
\[
Z=Z(\widetilde f,\widetilde h,\widetilde g,\widetilde g^0):\CM_*(f_-,h_-)\to\CM_*(f_+,h_+)
\]
as the linear extension of
\[
Zc_-=\sum_{c_+\in\Crit(h_+)\atop\ind(c_+)=\ind(c_-)}\#_2\M(c_-,c_+)c_+
\]
where $c_-\in\Crit(h_-)$. Standard arguments as in the Morse case
show that $Z$ induces isomorphisms on homologies
\[
\widetilde Z:\HM_*(f_-,h_-,g_-,g^0_-)\to\HM_*(f_+,h_+,g_+,g^0_+).
\]
This proves that Morse-Bott homology is independent of the choice of
a Morse-Bott quadruple. We refer to Appendix A in \cite{Fra04}, for
details.

\subsection{Floer homology for Hamiltonian deformation}
Let $(M,\om)$ be a symplectically aspherical closed $2n$-dimensional manifold
which means that $\om|_{\pi_2(M)}\equiv 0$.
Let $H:S^1\times M\to\R$ be a time-dependent Hamiltonian on $M$ and $H_t=H(t,\cdot)\in C^\infty(M,\R)$.
The {\em Hamiltonian vector field} $X_{H_t}$ is defined by
\[dH_t = -\iota_{X_{H_t}}\om.\]
An almost complex structure $J_t$ on $M$ is $\om$-{\em compatible}
if $\langle\cdot,\cdot \rangle :=\om(\cdot,J_t\cdot)$ is a Riemannian metric $\forall t\in S^1$.
Let $\L^0$ be the component of contractible loops on $M$.
The {\em Hamiltonian action} is
\[
\A_H:\L^0\to\R
\]
\[\A_H(x):=\int_{\D^2} \overline{x}^*\om-\int_0^1H(t,x(t))dt,\]
where $\overline{x}$ is an extension of $x$ to the unit disk. Since
we consider only contractible loops such an extension exists and
because $\om|_{\pi_2(M)}=0$ the action functional does not depend on
the choice of the filling disk. A positive gradient flow line
$v:\R\times S^1 \to M$ of $\A_H$ satisfies the perturbed
Cauchy-Riemann equation \beq\label{eq:grad}
\p_sv+J(t,v)(\p_tv-X_H(t,v))=0. \eeq Formally a positive gradient
flow line $v\in$``$C^\infty(\R,\L^0)$'' is a solution of the ``ODE''
\[
\p_sv-\nabla\A_H(v(s))=0.
\]
According to Floer, we interpret this as a solution of the PDE,
$v\in C^\infty(\R\times S^1,M)$ satisfying (\ref{eq:grad}).
\subsubsection{Sign and grading conventions}
The {\em Conley-Zehnder index} $\CZ(x;\tau) \in \Z$ of a nondegenerate 1-periodic orbit $x$ of $X_H$ with respect to a
symplectic trivialization $\tau : x^*TM \to S^1 \times \R^{2n}$ is defined as follows. The linearized Hamiltonian flow
along $x$ with $\tau$ defines a path of symplectic matrices $\Phi_t,\ t \in [0,1]$, with $\Phi_0=\id$ and $\Phi_1$ not having 1
as its spectrum. Then $\CZ(x;\tau)$ is the Maslov index of the path $\Phi_t$ as defined in \cite{RS,Sal}.
For a critical point $x$ of a $C^2$-small Morse function $H$ the Conley-Zehnder index
with respect to the constant trivialization $\tau$ is related to the Morse index by
\[\CZ(x;\tau)=n-\mu_{\rm Morse}(x).\]
We have the following identity
\[\CZ(x;\tau')=\CZ(x;\tau)-2c_1([\tau' \# \overline{\tau}]),\]
where $c_1$ is first Chern class and $\overline{\tau}$ means
opposite orientation of $\tau$. If $c_1(M)=0$ we obtain integer
valued Conley-Zehnder indices for all 1-periodic orbits. Without any
hypothesis on $c_1(M)$ we still have well-defined Conley-Zehnder
indices in $\Z_2$ and all the following result hold with respect to
this $\Z_2$-grading.

\subsubsection{Floer homology}
Let $\P(H)$ be the set of 1-periodic orbits of $X_H$. Given
$x_\pm\in\P(H)$ we denote by $\widehat{\M}(x_-,x_+)$ the space of
solutions of (\ref{eq:grad}) with $\lim_{s \to \pm
\infty}v(s,t)=x_{\pm}(t)$. Its quotient by the $\R$-action
$s_0\cdot(s,t):=(s+s_0,t)$ on the cylinder is called the {\em moduli
space of Floer trajectories} and is denoted by
\[\M(x_-,x_+):=\widehat{\M}(x_-,x_+)/\R\]
Assume now that all elements of $\P(H)$ are nondegenerate.
Suppose further that the almost complex structure $J=(J_t),\;t\in S^1$ is generic,
so that $\M(x_-,x_+)$ is a smooth manifold of dimension
\[\dim\M(x_-,x_+)=\CZ(x_-)-\CZ(x_+)-1 .\]
The boundary operator $\p_k:\CF_k(H)\to\CF_{k-1}(H)$ is defined by
\[\p x:=\sum_{\CZ(y)=k-1}\#_2\M(x,y)y.\]
It increases the action and satisfies $\p\circ\p=0$.
Hence we can define Floer homology
\[
\FH_*(H)=\H_*(\CF_\bullet(H),\p).
\]
Note that $(\CF_*(H),\p)$ depends on additional data, namely the Hamiltonian $H$, the symplectic structure $\om$,
and the almost complex structure $J_t$.

\subsubsection{Continuation map}
Floer proved that $\FH_*(H)$ depends only on the underlying manifold
$M$, see \cite{Fl1,Fl2,Fl3}. We now give a proof of Floer's theorem
via Morse-Bott methods which is due to Piunikhin, Salamon and
Schwarz \cite{PSS}. Take two different time-dependent Hamiltonians
$H_-,H_+\in C^\infty(S^1\times M)$, and choose $T>0$ and a smooth
family of Hamiltonians $H_s:S^1\times M\to\R$ with $s\in\R$ such
that \beqn H_s= \left\{
\begin{array}{l}
H_-\  \text{for} \ s\leq -T \\
H_+\  \text{for} \ s\geq T .
\end{array}
\right.
\eeq
Now take two different almost complex structures $J_{t,-},J_{t,+}$,
and a smooth family of almost complex structures $J_{t,s}$ such that
\beqn
J_{t,s}=
\left\{
\begin{array}{l}
J_{t,-}\  \text{for} \ s\leq -T \\
J_{t,+}\  \text{for} \ s\geq T .
\end{array}
\right.
\eeq
The continuation map between two different time-dependent Hamiltonian
\[
\zeta_{H_-}^{H_+}:\CF_*(H_-)\to\CF_*(H_+)
\]
is given by counting positive gradient flow lines $v\in$``$C^\infty(\R,\L^0)$'' of
\[
\A_{H_s}(x)=\int_{\D^2}\overline{x}^*\om-\int_0^1 H_s(t,x(t))dt
\]
where, $v\in C^\infty(\R\times S^1,M)$ is a solution of
\beq\label{eqn:fhgrad}
\left.
\begin{array}{r}
\p_sv+J_{t,s}(v)(\p_t v-X_{H_s}(v))=0 \\
\lim_{s\to\pm\infty}v(s)=v_\pm\in\Crit\A_{H_\pm}.
\end{array}
\right\}
\eeq
For critical points $v_\pm\in\Crit\A_{H_\pm}$, we consider the moduli spaces
\[
\mathcal N_{H_\pm}(v_-,v_+)=\mathcal N_{H_\pm}(v_-,v_+;H_s)=\{v:\R\times S^1\to M\ |\ v\text{ satisfies }(\ref{eqn:fhgrad})\}.
\]
If $\CZ(v_-)=\CZ(v_+)$ the space $\mathcal N_{H_\pm}(v_-,v_+)$ is
compact. A crucial ingredient for the compactness proof is, as in
the Morse case, a uniform energy bound for $v\in\mathcal
N_{H_\pm}(v_-,v_+)$, see \cite{Sal} for details. The uniform energy
bound is given by \bean
E(v)&=E_{J_{t,s}}(v)=\intinf\|\p_s v(s)\|_{J_{t,s}}^2ds \\
&=\intinf\langle\nabla^{J_{t,s}}\A_{H_s}(v(s)),\p_s v(s)\rangle_{J_{t,s}} ds \\
&=\intinf\frac{d}{ds}\A_{H_s}(v(s))ds-\intinf\dot\A_{H_s}(v(s))ds \\
&=\A_{H_+}(v_+)-\A_{H_-}(v_-)-\intinf\int_0^1\dot H_s(t,v(s,t))dtds \\
&\leq\A_{H_+}(v_+)-\A_{H_-}(v_-)+2T\max_{s\in[-T,T]\atop (t,x)\in S^1\times M}|\dot H_s(t,x)| ,
\eea
where $\|\cdot\|_{J_{t,s}}$ is given by $\int_0^1\om(\cdot,J_{t,s}\cdot)dt$.
Then we can define a linear map
\bean
\zeta_{H_-}^{H_+}=\zeta_{H_-}^{H_+}(H_s):\CF_*(H_-)&\to\CF_*(H_+) \\
v_-&\mapsto\sum_{v_+\in\Crit\A_{H_+}\atop\CZ(v_-)=\CZ(v_+)}\#_2\mathcal
N_{H_\pm}(v_-,v_+)v_+ \eea which induces a homomorphism on homology
level,
\[
\widetilde\zeta_{H_-}^{H_+}:\FH_*(H_-)\to\FH_*(H_+).
\]
The resulting homomorphism is independent of the choice of the homotopy $H_s$ and $J_{t,s}$
by a homotopy-of-homotopies argument, similar as in the Morse situation.
By functoriality, we conclude that $\widetilde\zeta_{H_-}^{H_+}$ is an isomorphism with inverse $\widetilde\zeta_{H_+}^{H_-}$.

Now consider the special case, where Hamiltonian $H\equiv 0$ is the zero Hamiltonian.
Then
\[
\A_H(x)=\A_0(x)=\int_{\D^2}\overline x^*\om
\]
is the symplectic area functional which is Morse-Bott and
\[
\Crit\A_0=\{x\in\L^0\ |\ x\text{ is a constant loop}\}\cong M.
\]
This implies that
\[
\FH_*(0,f)=\HM_*(f)\cong \H_*(M),
\]
where $f:M\to\R$ is an additional Morse function on the critical manifold $\Crit\A_0\cong M$.
Note that $\H_*(M)$ is the singular homology of $M$ which only depends on $M$.
Hence we conclude that Floer homology does not depend on additional structures like $\om,H$, and $J_t$.

\subsection{Floer homology for symplectic deformation}
In the previous subsection, we have seen that Floer homology is
independent of the symplectic structure. In this subsection, we ask
if this fact can also be seen directly by constructing a
continuation homomorphism between two symplectic forms. So far we
can only construct the continuation homomorphism for symplectic
deformations under additional assumptions on the symplectic
structures. Different from the case of Hamiltonian deformations, it
might be necessary to subdivide the symplectic deformations in a
sequence of small {\em adiabatic} steps.

Let $(M,g)$ be a $2n$-dimensional closed Riemannian manifold with two symplectic forms $\om_0$, $\om_1$.
Suppose that $(M,\om_s)$ is a family of symplectically aspherical closed manifolds,
where $\om_s=s\om_1+(1-s)\om_0$ for $s\in[0,1]$.
Then we want to construct a continuation map
\beqn
\Psi_{\om_0*}^{\om_1}:\CF_{*}(\om_0)\to\CF_{*}(\om_1)
\eeq
which induces an isomorphism on homology level.
In order to state and prove our result we need the term of the {\em cofilling function}.

\begin{Def}[Gromov \cite{Gro1}, Polterovich \cite{Pol}]\label{def:cofilling}
Let $\sigma\in\Om^2(M)$ be a closed weakly exact 2-form, then the {\em cofilling function} is
\[
u_{\sigma}(s):[0,\infty)\to[0,\infty)
\]
\[
u_{\sigma}(s)=u_{\sigma,g,x}(s)=\inf_{\theta\in\P_\sigma}\sup_{z\in
B_x(s)}|\theta_z|_{\widetilde{g}},
\]
where $\P_{\sigma}=\{\theta\in\Om^1(\widetilde{M})\;
|\;d\theta=\widetilde\sigma\}$ is the space of primitives for
$\sigma$ and $B_x(s)$ be the $s$-ball centered at
$x\in\widetilde{M}$.
\end{Def}

\begin{Rmk}\label{rmk:diffmetric}
If we choose another Riemannian metric $g'$ on $M$ and a different
base point $x'\in\widetilde{M}$ then we can check that
$u_{\sigma,g,x}\sim u_{\sigma,g',x'}$. \footnote{ $f\sim g \iff
f\lesssim g$ and $\ g\lesssim f$,\qquad $f\lesssim g \iff \exists$
$C>0$ such that $f(s)\leq C(g(s)+1),\ \forall s\in[0,\infty)$. }
Moreover, since the function $u_{\sigma,g,x}$ actually only depends
on the projection of $x$ from the universal cover to the compact
space $M$, the constant can be chosen uniformly in $x$.
\end{Rmk}

\begin{Ex}\label{ex:torus}
Let us consider $(\T^{2n}=\R^{2n} \slash \Z^{2n}, \om=\sum_{i=1}^{n}dx_i \wedge dy_i)$
with the metric induced by the standard metric on $\R^{2n}$.
Since $\om=d(\sum_{i=1}^{n}x_i \wedge dy_i)$ and
$\sum_{i=1}^{n}x_i \wedge dy_i$ has {\em linear growth} on the universal cover $\R^{2n}$,
it follows that $u(s) \lesssim s$.

Suppose that there is $\theta\in\Om^1(\R^{2n})$ such that $d\theta=\om$ and
\[
\sup_{z\in B_x(s)}|\theta_z|\leq Cs^\alpha
\]
for $0\leq\alpha<1$, then we get
\[
\pi r^2=\int_{\D_r}d\theta=\int_{\partial\D_r}\theta
\leq \max_{z\in\partial \D_r}|\theta_z|\int_{\partial \D_r}1
\leq Cr^\alpha \ 2\pi r,
\]
where $\D_r$ is a 2-dimensional disk of radius $r$.
This cannot happen as $r\to\infty$, thus we conclude that $u_\om(s)\sim s$.
\end{Ex}

\begin{Ex}\label{ex:hyperbolic}
Now consider $(\HH^2, \om=\frac{1}{y^2}dx \wedge dy)$
with hyperbolic metric $ds^2=\frac{1}{y^2}(dx^2+dy^2)$. The given symplectic form $\om$
has a {\em bounded} primitive 1-form $\frac{1}{y}dx$ which means that $u(s) \sim 1$.
It is well-known that a bounded 2-form on $\HH^n$ with canonical hyperbolic metric has
constant cofilling function, see Gromov \cite[$5.B_5$]{Gro1}.
\end{Ex}

\begin{Ex}[Solvable manifold]\label{ex:sol}
Let us construct a 3-manifold $M$ fibered over $S^1$ with fiber $\T^2$ with hyperbolic monodromy,
\[
A=\begin{pmatrix}
2&1\\
1&1
\end{pmatrix}.
\]
Let $y,z$ be the coordinates of the fiber torus, then $\sigma =dy \wedge dz$ is a well-defined 2-form on $M$.
Note that every primitive of $\sigma$ on the universal cover
has exponential growth, see Appendix \ref{app:sol}.

\end{Ex}

\begin{Lemma}[Quadratic isoperimetric inequality]\label{thm:isoperimetric}
Let $(M,g)$ be a closed Riemannian manifold with closed weakly exact 2-form $\sigma\in\Om^2(M)$.
If $u_{\sigma}(t) \lesssim t$, then the {\em quadratic isoperimetric inequality} holds,
\beqn
\int_{\D^2}\overline{v}^*\sigma\leq C\left(l(v)^2+1\right)
\eeq
where $l(v)=\int_{S^1}|\p_tv(t)|_gdt$,
$\overline v:\D^2\to M$ is an extension of the contractible loop $v:S^1\to M$ to the unit disk, and $C=C(M,g,\sigma)$.
\end{Lemma}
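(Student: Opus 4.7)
The natural strategy is to lift everything to the universal cover $\widetilde{M}$, where the weakly exact form $\widetilde{\sigma}$ becomes exact, and then to replace the area integral by a boundary integral whose size is controlled by the cofilling function. Since $v:S^1\to M$ is contractible, it lifts to a loop $\widetilde{v}:S^1\to\widetilde{M}$, and any extension $\overline{v}:\D^2\to M$ lifts to $\overline{\widetilde{v}}:\D^2\to\widetilde{M}$ with boundary $\widetilde{v}$. For any primitive $\theta\in\L_{\sigma}$ of $\widetilde{\sigma}$, Stokes' theorem then gives
\[
\int_{\D^2}\overline{v}^{*}\sigma=\int_{\D^2}\overline{\widetilde{v}}^{*}\widetilde{\sigma}=\int_{\D^2}\overline{\widetilde{v}}^{*}d\theta=\int_{S^1}\widetilde{v}^{*}\theta.
\]

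Next I would estimate the right-hand side pointwise. By Cauchy-Schwarz applied at each $t\in S^1$,
\[
\int_{S^1}\widetilde{v}^{*}\theta\;\leq\;\Bigl(\sup_{t\in S^1}|\theta_{\widetilde{v}(t)}|_{\widetilde{g}}\Bigr)\int_{S^1}|\p_{t}\widetilde{v}(t)|_{\widetilde{g}}\,dt\;=\;\Bigl(\sup_{t\in S^1}|\theta_{\widetilde{v}(t)}|_{\widetilde{g}}\Bigr)\,l(v),
\]
since the projection $\widetilde{M}\to M$ is a local isometry. The key geometric observation is that $\widetilde{v}(S^1)$ is entirely contained in the ball $B_{\widetilde{v}(0)}(l(v))$, because for every $t$ the arc from $\widetilde{v}(0)$ to $\widetilde{v}(t)$ has length at most $l(v)$. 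Taking the infimum over all $\theta\in\L_{\sigma}$ therefore bounds the supremum by $u_{\sigma,g,\widetilde{v}(0)}(l(v))$. By Remark \ref{rmk:diffmetric} the value at a different base point is comparable to $u_{\sigma}=u_{\sigma,g,x}$, so up to additive and multiplicative constants we obtain
\[
\int_{\D^2}\overline{v}^{*}\sigma\;\leq\;C_{1}\bigl(u_{\sigma}(l(v))+1\bigr)\,l(v).
\]

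Finally, I would plug in the standing hypothesis $u_{\sigma}(t)\lesssim t$, which provides a constant $C_{2}$ with $u_{\sigma}(t)\leq C_{2}(t+1)$. The previous bound then becomes a polynomial expression $C_{3}(l(v)^{2}+l(v)+1)$, and the elementary inequality $l(v)\leq\tfrac{1}{2}(l(v)^{2}+1)$ absorbs the linear term into the quadratic one, producing the asserted constant $C=C(M,g,\sigma)$. The main technical subtlety, and the only step that requires care, is the passage from a fixed base point $x$ in the definition of $u_{\sigma}$ to the moving base point $\widetilde{v}(0)$; this is exactly what Remark \ref{rmk:diffmetric} is designed to handle, and it is the reason the $+1$ terms in the $\sim$-relation are harmless once we pass to the quadratic bound.
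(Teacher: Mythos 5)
Your proof follows the same strategy as the paper's: lift to the universal cover, apply Stokes' theorem to turn the area term into the boundary integral $\int_{S^1}\widetilde v^*\theta$, bound it by the sup-norm of the primitive over a ball whose radius is controlled by $l(v)$, and finish with the hypothesis $u_\sigma(t)\lesssim t$. The only structural difference is bookkeeping: the paper fixes once and for all a near-optimal primitive $\theta$ with linear growth and estimates directly against $u_\sigma(l(v))$, whereas you keep $\theta$ arbitrary, take the infimum at the end to produce $u_{\sigma,g,\widetilde v(0)}(l(v))$, and then appeal to Remark~\ref{rmk:diffmetric} to change base points.

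The one step that needs tightening is exactly that appeal. Remark~\ref{rmk:diffmetric} compares $u_{\sigma,g,x}$ with $u_{\sigma,g',x'}$ for two \emph{fixed} choices of metric and base point; the implicit constants in the $\sim$-relation depend on $d_{\widetilde g}(x,x')$. In your argument $\widetilde v(0)$ moves with the loop, so a blind invocation gives $v$-dependent constants, not the uniform $C=C(M,g,\sigma)$ claimed. The fix is short but must be said: $\int_{S^1}\widetilde v^*\theta$ is independent of the chosen lift of $v$ (lifts differ by deck transformations and $\widetilde M$ is simply connected), and since $M$ is closed the deck group acts cocompactly, so one may normalize the lift so that $\widetilde v(0)$ lies in a fixed fundamental domain. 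Then $d_{\widetilde g}(\widetilde v(0),x)$ is bounded by a constant depending only on $(M,g)$, and the comparison in Remark~\ref{rmk:diffmetric} is uniform in $v$. The paper's own proof performs the same normalization implicitly when it writes $\widetilde v(S^1)\subset B_x(l(\widetilde v))$. Your final sentence slightly misdiagnoses the subtlety: the danger is not the additive $+1$ terms but the $v$-dependence of the comparison constants; after normalizing the base point both are harmless.
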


\begin{proof}
Since $u_{\sigma}(t) \lesssim t$, we can choose a 1-form
$\theta\in\P_{\sigma}$ which has linear growth on the universal
cover and such that $\max_{z\in B_{\widetilde v(0)}(l(\widetilde
v))}|\theta_z|_{\widetilde g} \leq u_\sigma(l(\widetilde v))+1$. Let
$\widetilde{\overline{v}}\ : \D^2 \to \widetilde{M}$ be the lifting
of $\overline{v}$ and set $\theta_{\max}(\widetilde v) = \underset{z
\in \widetilde v(S^1)}{\max} |\theta_z|_{\widetilde g}$. Note that
\bean
\theta_{\max}(\widetilde v)&=\max_{z\in\widetilde v(S^1)}|\theta_z|_{\widetilde g} \\
&\leq\max_{z\in B_{\widetilde v(0)}(l(\widetilde v))}|\theta_z|_{\widetilde g} \\
&\leq u_\sigma(l(\widetilde v))+1 \\
&\leq \frac{C}{2}(l(\widetilde v)+1),
\eea
for some $C=C(M,g,\sigma)\in\R^+$.
The last inequality uses the fact $u_\sigma(t)\lesssim t$.
Then we get
\bean
\int_{\D^2}\overline{v}^*\sigma&=\int_{\D^2}\widetilde{\overline{v}}^* \widetilde{\sigma}\\
&= \int_{\D^2}\widetilde{\overline{v}}^* d\theta\\
&= \int_{S^1}\widetilde v^* \theta\\
&\leq\theta_{\max}(\widetilde v)l(v)\\
&\leq \frac{C}{2}\left(l(v)^2+l(v)\right) \\
&\leq C\left(l(v)^2+1\right).
\eea
Let us denote the constant $C$ as the {\em isoperimetric constant}.
\end{proof}


\begin{Def}
Let $M^{2n}$ be a closed manifold with a time-dependent Hamiltonian
$H:S^1\times M\to\R$. A pair $(\om_0,\om_1)$ is called a {\em
continuation pair} on $(M,H)$ if
\begin{itemize}
\item $(M,\om_s)$ is a symplectically aspherical closed manifold $\forall s\in[0,1]$, \\
where $\om_s=\om_0+s\sigma,\ \ \sigma=\om_1-\om_0$;
\item $\A_{\om_s}=\A_{H,\om_s}:\L^0\to\R$ is Morse, for generic $s\in[0,1]$ and $s=0,1$;
\item $u_\sigma(t)\lesssim t$.
\end{itemize}
\end{Def}

\begin{Rmk}\label{rmk:lip_prop}
Let us apply Lemma \ref{thm:isoperimetric} to the {\em continuation pair} $(\om_0,\om_1)$ on $M$.
First set
\beqn\label{eqn:lin_symp_form}
\om_s=\om_0+\beta(s)\sigma,\qquad \sigma=\om_1-\om_0
\eeq
where $\beta(s)\in C^\infty(\R,[0,1])$ is a cut-off function satisfying $\beta(s)=1$ for $s \geq 1$,
$\beta(s)=0$ for $s \leq 0$ and $0\leq\dot{\beta}(s) \leq 2$. Then we get
\bean
\left| \int_{\D^2}\overline{v}^*(\om_s-\om_0) \right|
&\leq \left| \int_{\D^2}\overline{v}^*\beta(s)\sigma \right|
= \beta(s)\left| \int_{\D^2}\overline{v}^*\sigma \right| \\
&\leq C\beta(s)\left[\left( \int_{S^1} |\partial_t v(t)| dt \right)^2+1\right].
\eea
Note that $C\beta(s)$ is continuous and $C\beta(s)=0$ for $s\leq0$.
Let us denote the function $C\beta(s)$ as the {\em isoperimetric constant function}.
\end{Rmk}

\begin{Thm}\label{thm:continuation}
Let $M^{2n}$ be a closed manifold with a time-dependent Hamiltonian $H:S^1\times M\to\R$.
If $(\om_0,\om_1)$ is a continuation pair on $(M,H)$ then there is a continuation map
\[
\Psi_{\om_0*}^{\om_1}:\CF_*(\om_0)\to\CF_*(\om_1)
\]
which induces an isomorphism
\[
\widetilde{\Psi_{\om_0}^{\om_1}}_*:\FH_*(\om_0)\to\FH_*(\om_1).
\]
\end{Thm}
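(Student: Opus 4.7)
The plan is to define $\Psi_{\om_0}^{\om_1}$ by counting isolated solutions of a continuation PDE in which the symplectic structure itself varies with $s$, mimicking the Hamiltonian-deformation construction of Section~2.3. Pick a smooth monotone cutoff $\beta:\R\to[0,1]$ with $\beta(s)=0$ for $s\leq 0$ and $\beta(s)=1$ for $s\geq T$, set $\om_s:=\om_0+\beta(s)\sigma$, and fix a generic $\om_s$-compatible homotopy of almost complex structures $J_{t,s}$. For $v_\pm\in\Crit\A_{H,\om_\pm}$ I would consider
\[
\mathcal{N}(v_-,v_+):=\{v:\R\times S^1\to M \mid \p_s v+J_{t,s}(v)(\p_t v-X_H(v))=0,\ \lim_{s\to\pm\infty}v(s)=v_\pm\},
\]
and tentatively define $\Psi_{\om_0}^{\om_1}(v_-):=\sum_{\CZ(v_+)=\CZ(v_-)}\#_2\mathcal{N}(v_-,v_+)\,v_+$.

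The core of the proof, and the step I expect to be the main obstacle, is a uniform energy bound, which unlike in Section~2.3 is not automatic because $\om_s$ varies with $s$. Differentiating the action along a trajectory gives
\[
\tfrac{d}{ds}\A_{H,\om_s}(v(s,\cdot))=\|\p_s v(s)\|_{J_{t,s}}^2+\dot\beta(s)\int_{\D^2}\overline{v(s,\cdot)}^*\sigma,
\]
so
\[
E(v)=\A_{H,\om_1}(v_+)-\A_{H,\om_0}(v_-)-\intinf\dot\beta(s)\int_{\D^2}\overline{v(s,\cdot)}^*\sigma\,ds.
\]
I would control the tail by applying the quadratic isoperimetric inequality of Lemma~\ref{thm:isoperimetric} slice by slice, $|\int_{\D^2}\overline{v(s,\cdot)}^*\sigma|\leq C(l(v(s))^2+1)$, and then using the Floer equation together with Cauchy--Schwarz to convert $l(v(s))^2$ into $2\|\p_s v(s)\|^2+2\|X_H\|_\infty^2$. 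Collecting gives
\[
E(v)\leq \A_{H,\om_1}(v_+)-\A_{H,\om_0}(v_-)+2C\|\dot\beta\|_\infty E(v)+\mathrm{const},
\]
and once $\beta$ is sufficiently stretched (in $T$) so that $2C\|\dot\beta\|_\infty<1$, the right-hand $E(v)$ is absorbed into the left and one has a uniform bound on each action window. This absorption argument is where the linear-cofilling (equivalently \emph{quadratic} isoperimetric) hypothesis built into the continuation-pair assumption enters in an essential way.

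Should $C=C(M,g,\sigma)$ be too large for the absorption to be carried out in one step---for instance under the derivative constraint $\dot\beta\leq 2$ of Remark~\ref{rmk:lip_prop}---I would perform an adiabatic subdivision: interpolate through $\om_k:=\om_0+(k/N)\sigma$ for $k=0,\ldots,N$ with $N$ large, so that the effective isoperimetric constant within each sub-step is $C/N$, run the absorption in every sub-step, and compose the resulting $N$ continuation maps; the genericity clause in the definition of a continuation pair lets us pick intermediate $\om_k$ for which $\A_{H,\om_k}$ is Morse. Once the uniform energy bound is secured, the rest is routine Floer machinery: symplectic asphericity of each $(M,\om_s)$ precludes bubbling, generic $J_{t,s}$ gives $\dim\mathcal{N}(v_-,v_+)=\CZ(v_-)-\CZ(v_+)$, the boundary of its one-dimensional part produces the chain-map identity $\p_+\circ\Psi_{\om_0}^{\om_1}=\Psi_{\om_0}^{\om_1}\circ\p_-$, and a homotopy-of-homotopies combined with the constant-homotopy computation from Section~2.1 shows that the symmetrically constructed $\Psi_{\om_1}^{\om_0}$ is a two-sided inverse on homology.
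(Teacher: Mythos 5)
Your proposal is correct and the energy computation tracks the paper's at every step: the slice-by-slice application of the quadratic isoperimetric inequality, the substitution $\p_t v=J\p_s v+X_H$ with Cauchy--Schwarz to turn $\ell(v(s))^2$ into $2\|\p_s v(s)\|^2+2\|X_H\|_\infty^2$, and the absorption of $E(v)$ into itself once the coefficient is below $1$. Where you diverge is in how you arrange for that coefficient to be small. You first propose to stretch the cutoff $\beta$ over a long interval so that $2C\|\dot\beta\|_\infty<1$ in a single homotopy; this is sound, since the dangerous term $\intinf\dot\beta(s)\|\p_s v\|^2\,ds$ is dominated by $\|\dot\beta\|_\infty E(v)$ and shrinks under stretching, while the constant $2C\|X_H\|_\infty^2+C$ survives because $\intinf\dot\beta=1$. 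The paper instead fixes $\dot\beta\le 2$ (interpolating over $[0,1]$) and performs the adiabatic subdivision $\om^i=\om_0+d(i)\sigma$ with each sub-step having isoperimetric constant $\le 1/8$ — which is precisely your fallback option. So you cover the paper's route, but your primary route (stretching) is a genuinely simpler closure of the argument for this theorem. It is worth pointing out why the paper opts for subdivision anyway: in the Rabinowitz Floer analogue (Theorem \ref{thm:rfhcon}), the Lagrange multiplier $\eta$ must also be absorbed (Proposition \ref{prop:lagbd2}), and there the $\|\eta\|_\infty$-term enters with a coefficient proportional to $C$ alone rather than to $C\|\dot\beta\|_\infty$, because it is paired with $\intinf\dot\beta\,ds=1$, which is invariant under stretching. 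Hence stretching does not help for the RFH statement, and the paper uses the subdivision uniformly for both theorems; for Theorem \ref{thm:continuation} in isolation, your one-shot stretching suffices.
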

\begin{proof}
First recall the definition of the action functional
\[
\A_{H,\om}:\L^0\to\R
\]
\beqn \A_{H,\om}(x)=\int_{\D^2}\overline{x}^*\om-\int_0^1H(t,x(t))dt,
\eeq where $\overline{x}:\D^2\to M$ is an extension of the
contractible loop $x$ to the unit disk. By the Morse condition in
the definition of the continuation pair $(\om_0,\om_1)$, we know
that $\om_0,\om_1\in\Om^{\rm symp}(M)$ are nondegenerate symplectic
forms. This means that every fixed point
$x\in\text{Fix}\phi_{H,\om_i}^1$ is nondegenerate, where
$\phi_{H,\om_i}^1:M\to M$ is the time-1-map for the flow of the
non-autonomous Hamiltonian vector field $X_H^{\om_i}$.

Let us consider
\[\om_s=\om_0+\beta(s)\sigma,\qquad \sigma=\om_1-\om_0\]
as in Remark \ref{rmk:lip_prop}. We choose further almost complex
structure $J_{s,t}$ for $\om_s$. For technical reasons, we now
subdivide $\om_s$ into sufficiently small pieces. Let
$\{\om^i\}_{i=0}^{N}$ be a subdivision of $\om_s$ satisfying
\begin{itemize}
\item $\om^i=\om_0+d(i)\sigma$, where $0=d(0)<d(1)<\cdots <d(N)=1$;
\item $\A_{H,\om^i}:\L^0\to\R$ is Morse, $\forall i=0,1,\dots,N$;
\item $C(M,g,(d(i+1)-d(i))\sigma)\leq 1/8$, $\forall i=0,1,\dots,N-1$,\\ where $C$ is the isoperimetric constant.
\end{itemize}
The above 2nd condition is guaranteed by the generic Morse condition
for the continuation pair $(\om_0,\om_1)$. By Remark
\ref{rmk:lip_prop}, we can assume the 3rd condition.

Let $\om_s^i=\om^i+\beta(s)(\om^{i+1}-\om^i)$ be a homotopy between $\om^i$ and $\om^{i+1}$.
Now consider $v:\R \times S^1 \to M$ satisfying the gradient flow equation
\beq\label{eqn:gradeqn}
\partial_sv+J_{s,t}(v)(\partial_tv-X_H^{\om_s^i}(t,v))=0,
\eeq and the limit condition \beq\label{eqn:limit_of_grad}
    \lim_{s \to -\infty}v(s,t)=v_- (t) \in \Crit \A_{H,\om^i}
    \quad
    \lim_{s \to +\infty}v(s,t)=v_+ (t) \in \Crit \A_{H,\om^{i+1}}.
\eeq
We then want to define a map
\[
\Psi_{\om^i \ \ k}^{\om^{i+1}}:\CF_k(\om^i)\to\CF_k(\om^{i+1})
\]
given by
\beqn
\Psi_{\om^i\ \ k}^{\om^{i+1}}(v_-)=\sum_{\CZ(v_+)=k}\#_2\M_{v_-,v_+}(\om^i,\om^{i+1})v_+.
\eeq
Here,
\beqn
\M_{v_-,v_+}(\om^i,\om^{i+1})=\{v:\R\times S^1\to M\ |\ v\text{ satisfies }(\ref{eqn:gradeqn}),\ (\ref{eqn:limit_of_grad})\}.
\eeq

Because $\om_s$ is symplectically aspherical $\forall s\in\R$, there is no bubbling.
So it suffices to bound the energy $E(v)=\intinf\|\partial_sv\|_{s}^2ds$
of $v \in C^{\infty}(\R \times S^1,M)$ in terms of $v_-,\ v_+$
where, $\|\cdot,\cdot\|_s$ is the $L^2$-norm defined by $\int_0^1\om_s(\cdot,J_s\cdot)dt$.
We first compute
\bean
E(v)&=\int_{-\infty}^{\infty} \| \partial_s v \|_{s}^2 ds \\
&=\intinf\<\partial_s v, \nabla \A_{H,\om_s^i}(v) \rangle_{s}ds\\
&=\intinf\frac{d}{ds}\A_{H,\om_s^i}(v)ds-\intinf\dot{\A}_{H,\om_s^i}(v)ds \\
&=\A_{H,\om^{i+1}}(v_+)-\A_{H,\om^i}(v_-)-\intinf\dot{\A}_{H,\om_s^i}(v)ds.
\eea
So we need to consider the following
\bean
\left| \int_{-\infty}^{\infty}\dot{\A}_{H,\om_s^i}(v)ds \right|
&\leq \int_{\infty}^{\infty}\dot{\beta}(s) \left| \int_{\D^2}\overline{v}^*(\om^{i+1}-\om^i) \right| ds\\
&\leq \intinf \dot{\beta}(s) C\left( \int_{S^1}|\partial_{t}v|_s dt \right)^2 ds+C
\eea
For some $C=C(M,g,(d(i+1)-d(i))\sigma)$.
Here $|\cdot,\cdot|_s$ is the norm on $M$ induced by the Riemannian metric $\om_s(\cdot,J_s\cdot)$.
From the equation (\ref{eqn:gradeqn}), we get
\beq\label{eqn:gradeqn2}
\partial_t v = J(s,v)\partial_s v + X_H^{\om_s^i} (v).
\eeq \\
By putting the above equation (\ref{eqn:gradeqn2}) into the isoperimetric inequality, we obtain
\bean\label{eqn:energybound1}
\left| \int_{-\infty}^{\infty}\dot{\A}_{H,\om_s^i}(v)ds \right|
&\leq\intinf \dot{\beta}(s) C\left( \int_{S^1}|\partial_{t}v|_s dt \right)^2 ds+C \\
&\leq C \intinf \dot{\beta}(s) \|\partial_t v \|_s^2 ds+C \\ 
&= C \intinf \underbrace{\dot{\beta}(s)}_{\leq 2} \<J_s(v)\partial_s v + X_H^{\om_s^i}(v),J_s(v)\partial_s v + X_H^{\om_s^i}(v) \rangle_s ds+C\\
&\leq 2C \left( \int_{0}^{1} \| \partial_s v \|_s^2 ds+
\int_{0}^{1} \underbrace{2\<J_s\partial_sv,X_H^{\om_s^i}(v) \rangle_s}_{\leq \| \partial_s v \|_s^2+\| X_H^{\om_s^i}(v) \|_s^2} ds +
\int_{0}^{1} \| X_H^{\om_s^i}(v) \|_s^2 ds \right)+C\\
&\leq 4C \intinf \|\partial_s v\|_s^2 ds + 4C \int_{0}^{1}\|X_H^{\om_s^i}(v)\|_s^2ds+C \\
&\leq 4C\;E(v) + 4C\;c'+C,
\eea
where $c' \in \R$ is chosen satisfying $\|X_H^{\om_s^i}(v)\|_s^2 \leq c'$. This is possible by the compactness of $M$. Thus we get
\bean\label{eqn:energybound2}
E(v) \leq \underbrace{\A_{H,\om^{i+1}}(v_+)-\A_{H,\om^i}(v_-)+4Cc'+C}_{=:c''} +4C\;E(v).
\eea
Since $C=C(M,g,(d(i+1)-d(i))\sigma) \leq \frac{1}{8}$, we finally obtain
\bean\label{eqn:energybound3}
E(v) &\leq c'' + \frac{1}{2}E(v) \\
E(v) &\leq 2c''.
\eea

Now we define the continuation map from $\om_0$ to $\om_1$ by juxtaposition
\[
\Psi_{\om_0}^{\om_1}:\CF(\om_0)\to\CF(\om_1)
\]
\[
\Psi_{\om_0}^{\om_1}=\Psi_{\om^{N-1}}^{\om^N}\circ\cdots\circ\Psi_{\om^1}^{\om^2}\circ\Psi_{\om^0}^{\om^1}.
\]
By a standard argument in Floer homology theory,
each $\Psi^{\om^{i+1}}_{\om^i}$ commutes with the boundary operators of the Floer chain complex.
This implies that $\Psi_{\om_0}^{\om_1}$ also interchanges the boundary operators.
Hence we get an induced homomorphism
\beqn
\widetilde{\Psi_{\om_0}^{\om_1}}:\FH(M,\om_0)\to\FH(M,\om_1).
\eeq
In a similar way we can construct
\beqn
\widetilde{\Psi_{\om_1}^{\om_0}}:\FH(M,\om_1)\to\FH(M,\om_0),
\eeq
by following the homotopy backwards.
By a homotopy-of-homotopies argument, we conclude
$\widetilde{\Psi_{\om_1}^{\om_0}}\circ\widetilde{\Psi_{\om_0}^{\om_1}}=\id_{\FH(M,\om_0)}$ and
$\widetilde{\Psi^{\om_1}_{\om_0}}\circ\widetilde{\Psi^{\om_0}_{\om_1}}=\id_{\FH(M,\om_1)}$.
Therefore $\widetilde{\Psi_{\om_0}^{\om_1}}$ is an isomorphism with inverse $\widetilde{\Psi^{\om_0}_{\om_1}}$.
\end{proof}

\begin{Rmk}
In the proof of Theorem \ref{thm:continuation}, the {\em quadratic isoperimetric inequality} is essential.
One can check that if $u_{\om_1-\om_0}(t)\gnsim t$, the above proof does not work anymore.
\end{Rmk}


\section{Rabinowitz Floer homology}

\subsection{RFH for the cotangent bundle endowed with its canonical symplectic form}
In this section, we consider the cotangent bundle $(T^*N,\om_0=d\lambda_{\rm liou})$
of a closed Riemannian manifold $(N,g)$
where $\lambda_{\rm liou}=p\wedge dq$ is the {\em Liouville 1-form} for canonical coordinates $(q,p)\in T^*N$.
On the exact symplectic manifold $(T^*N,\lambda_{\rm liou})$, the {\em Liouville vector field} $X$
is defined by $\iota_X \om_0=\lambda_{\rm liou}$.
$(T^*N,\lambda_{\rm liou})$ is {\em complete and convex} i.e. the following conditions hold:
\begin{itemize}
\item There exists a compact subset $K\subset T^*N$ with smooth boundary such that $X$ points out of $K$ along $\p K$;
\item The vector field $X$ is complete and has no critical point outside $K$.
\end{itemize}
Equivalently, $(T^*N,\lambda_{\rm liou})$ is complete and convex
since there exists an embedding $\phi:\Sigma\times [1,\infty)\to
T^*N$ such that $\phi^*\lambda=r\alpha_{\Sigma}$, where $r$ denotes
the coordinates on $[1,\infty)$ and $\alpha_{\Sigma}$ is a contact
form, and such that $T^*N\setminus\phi(\Sigma\times(1,\infty))$ is
compact.

Consider now the complete convex exact symplectic manifold
$(T^*N,\lambda_{\rm liou})$ and the compact subset $DT^*N\subset
T^*N$ with smooth boundary $\Sigma:=ST^*N=\p DT^*N$ such that
$\lambda_{\rm liou}|_{ST^*N}$ is a positive contact form with a Reeb
vector field $R$. We abbreviate by
$\L:=\L_{T^*N}=C^{\infty}(S^1,T^*N)$ the free loop space of $T^*N$.
A {\em defining Hamiltonian} for $\Sigma$ is a smooth function
$H:T^*N\to\R$ with regular level set $\Sigma=H^{-1}(0)$ whose {\em
Hamiltonian vector field} $X_H$ has compact support and agrees with
$R$ along $\Sigma$. Given such a Hamiltonian, the {\em Rabinowitz
action functional} is defined by
\[\A_H : \L\times\R\to\R\]
\[\A_H(x,\eta):=\int_0^1x^*\lambda-\eta\int_0^1H(x(t))dt.\]
Critical points of $\A_H$ are solutions of the equations
\begin{equation}\label{eq:crit1}
\left.
\begin{array}{cc}
\p_t x(t)=\eta X_H(x(t)), & t \in \R/\Z \\
\int_0^1H(x(t))dt=0. &
\end{array}
\right\}
\end{equation}
By the first equation $H$ is constant along $x$, so the second equation implies $H(x(t))\equiv 0$.
Since $X_H=R$ along $\Sigma$, the equations (\ref{eq:crit1}) are equivalent to
\beqn\label{eq:crit2}
\left.
\begin{array}{cc}
\p_t x(t)=\eta R(x(t)), & t \in \R/\Z \\
x(t)\in\Sigma, & t \in \R/\Z.
\end{array}
\right\}
\eeq
So there are three types of critical points i.e. closed Reeb orbits on $\Sigma$:
\begin{itemize}
\item Positively parametrized closed Reeb orbits corresponding to $\eta>0$;
\item Negatively parametrized closed Reeb orbits corresponding to $\eta<0$;
\item Constant loops on $M$ corresponding to $\eta=0$.
\end{itemize}
The action of a critical point $(x,\eta)$ is $\A_H(s,\eta)=\eta$.

A compatible almost complex structure $J$ on part of the symplectization
$(\Sigma\times\R_+,d(r\alpha_{\Sigma}))$ of a contact manifold
$(\Sigma,\alpha_{\Sigma})$ is called {\em cylindrical} if it satisfies:
\begin{itemize}
\item $J$ maps the Liouville vector field $r\p_r$ to the Reeb vector field $R$;
\item $J$ preserves the contact distribution $\ker\alpha_{\Sigma}$;
\item $J$ is invariant under the Liouville flow $(y,r)\mapsto (y,e^tr),\;t\in\R.$
\end{itemize}

For a smooth family $(J_t)_{t\in S^1}$ of cylindrical \acs s on $(T^*N,\lambda_{\rm liou})$
we consider the following metric
$g=g_J$ on $\L\times\R$. Given a point $(x,\eta)\in\L\times\R$ and two tangent vectors
$(\hat{x}_1,\hat{\eta}_1),\;(\hat{x}_2,\hat{\eta}_2)\in T_{(x,\eta)}(\L\times\R)=\Gamma(S^1,x^*T(T^*N))\times\R$
the metric is given by
\[
g_{(x,\eta)}((\hat{x}_1,\hat{\eta}_1),(\hat{x}_2,\hat{\eta}_2))
=\int_0^1\om\left(\hat{x}_1,J_t(x(t))\hat{x}_2 \right)dt+\hat{\eta}_1\cdot\hat{\eta}_2.
\]
The gradient of the Rabinowitz action functional $\A_H$ \wrt the metric $g_J$ at a point $(x,\eta)\in \L\times\R$ reads
\beqn
\nabla\A_H(x,\eta)=\nabla_J\A_H(x,\eta)=
\begin{pmatrix}
-J_t(x)\left(\p_tx-\eta X_H(x) \right) \\
-\int_0^1H(x(t))dt
\end{pmatrix}.
\eeq
Hence the positive gradient flow lines are solutions $(x,\eta)\in C^{\infty}(\R\times S^1,T^*N)\times C^{\infty}(\R,\R)$
of the partial differential equation
\beqn\label{eq:rfhgrad}
\left.
\begin{array}{cc}
\p_sx+J_t(x)\left(\p_tx-\eta X_H(x) \right)=0 \\
\p_s\eta+\int_0^1H(x(t))dt=0
\end{array}
\right\}.
\eeq
Then for $-\infty<a<b\leq\infty$ the resulting truncated Floer homology groups
\[\RFHb^{(a,b)}(\Sigma,T^*N):=\HM^{(a,b)}(\A_H,J),\]
corresponding to action values in $(a,b)$, are well-defined and do not depend on the choice of the cylindrical $J$
and the defining Hamiltonian $H$. The {\em Rabinowitz Floer homology} of $(\Sigma,T^*N)$ is defined as the limit
\[\RFHb_*(\Sigma,T^*N):=
\lim_{\stackrel\longrightarrow a} \lim_{\stackrel \longleftarrow
b}\RFHb_*^{(-a,b)}(\Sigma,T^*N), \qquad a,b\to\infty.\] This
definition is equivalent to the original one in \cite{CF09} by
\cite[Theorem A]{CF09'}.

Since the Rabinowitz action functional is defined on the full loop space
and the first part of the differential in the Rabinowitz Floer complex counts topological cylinders,
we can split the Rabinowitz Floer homology into factors labeled by free homotopy classes
\[
\RFHb(\Sigma,T^*N)=\bigoplus_{\nu\in[S^1,T^*N]}\RFH^\nu(\Sigma,T^*N),
\]
where $\RFH^\nu(\Sigma,T^*N)$ is the Rabinowitz Floer homology for the Rabinowitz action functional
restricted to $\L^\nu=\L^\nu_{T^*N}$.

\subsubsection{Index and grading convention}
Let $\M$ be the moduli space of all finite energy gradient flow lines of the action functional
$\A_H:\L\times\R\to\R$. Since $\A_H$ is Morse-Bott, every finite energy gradient flow line
$(v,\eta)\in C^\infty(\R\times S^1,V)\times C^\infty(\R,\R)$ converges exponentially at both
ends to critical points $(v_\pm,\eta_\pm)\in\Crit(\A_H)$ as the flow parameter goes to $\pm\infty$.
The linearization of the gradient flow equation along any path $(v,\eta)$ in $\L\times\R$ which converges
exponentially to the critical point of $\A_H$ gives rise to a Fredholm operator $D^{\A_H}_{(v,\eta)}$.
Let $C^-,C^+\subset\Crit(\A_H)$ be the connected component of the critical manifold of $\A_H$
containing $(v_-,\eta_-),(v_+,\eta_+)$ respectively. The local virtual dimension of $\M$ at a finite energy
gradient flow line is defined to be
\[
\mathrm{virdim}_{(v,\eta)}\M:=\ind D^{\A_H}_{(v,\eta)}+\dim C^-+\dim C^+
\]
where $\ind D^{\A_H}_{(v,\eta)}$ is the Fredholm index of the Fredholm operator $\ind D^{\A_H}_{(v,\eta)}$.
For generic compatible almost complex structures, the moduli space of finite energy gradient flow lines is
a manifold and the local virtual dimension of the moduli space at a gradient flow line $(v,\eta)$ corresponds
to the dimension of the connected component of $\M$ containing $(v,\eta)$.

To define a $\Z$-grading on $\RFHb(\Sigma,T^*N)$, we need that the
local virtual dimension just depends on the asymptotics of the
finite energy gradient flow line. Since the first Chern class of
$T^*N$ vanishes, it can be shown that the local virtual dimension
equals
\[
\mathrm{virdim}_{(v,\eta)}\M=\CZ(v_+)-\CZ(v_-)+\frac{\dim C^-+\dim C^+}{2}.
\]
In order to deal with the third term it is useful to introduce the following index for the Morse function $h$
on $\Crit(\A_H)$. We define the {\em signature index} $\ind^\sigma_h(c)$ of a critical point $c$ of $h$ to be
\[
\ind^\sigma_h(c):=-\frac{1}{2}\mathrm{sign}(\Hess_h(c)).
\]
We define a {\em grading} $\mu$ on $\RFCb(\Sigma,T^*N)=\CM(\A_H,h)$ by
\[
\mu(c):=\CZ(c)+\ind^\sigma_h(c)+\frac{1}{2}.
\]
These define a $\Z$-grading on the homology $\RFHb(\Sigma,T^*N)$. We
refer to \cite{CF09, MerPat10} for more details.

\subsection{Rabinowitz Floer homology for a twisted cotangent bundle}
In the previous section, we considered an exact symplectic manifold.
By the exactness of symplectic form, there is no need to care about the filling disk of a given loop.
In general, twisted symplectic forms are not exact.
In order to define Rabinowitz Floer homology,
we need the notions of a {\em symplectically atoroidal manifold}
and a {\em virtual restricted contact type hypersurface}.

\begin{Def}
A symplectic manifold $(M,\om)$ is called {\em symplectically atoroidal} if
\[
\int_{\T^2}f^*\om=0,
\]
for any smooth function $f:\T^2\to T^*N$.
\end{Def}

\begin{Rmk}
Since there is a map $g:\T^2\to S^2$ of non vanishing degree, {\em
symplectically atoroidal} implies {\em symplectically aspherical}.
\end{Rmk}

\begin{Lemma}[Merry \cite{Mer10}]\label{lem:ato}
Let $\sigma\in\Om^2(N)$ be a weakly exact 2-form and $u_{\sigma}\sim 1$,
then $f^*\sigma$ is exact for any smooth map $f:\T^2\to N$.
\end{Lemma}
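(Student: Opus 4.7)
The plan is to show that the closed $2$-form $f^*\sigma$ on $\T^2$ has zero total integral; since $\H^2(\T^2;\R)=\R$ is detected by integration, this is equivalent to $f^*\sigma$ being exact. So the entire task reduces to proving $\int_{\T^2} f^*\sigma = 0$ under the hypotheses that $\widetilde\sigma=d\theta$ for some $\theta\in\Om^1(\widetilde N)$ with $\sup_{z\in\widetilde N}|\theta_z|_{\widetilde g}<\infty$ (which is exactly what $u_\sigma\sim 1$ buys us after invoking Definition \ref{def:cofilling} and Remark \ref{rmk:diffmetric}).

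First I would lift. Writing $\T^2=\R^2/\Z^2$, the map $\R^2\to\T^2\xrightarrow{f}N$ is defined on a simply connected domain, so it admits a lift $\widetilde f:\R^2\to\widetilde N$. For every $n\in\Nn$ the projection $\R^2\to\T^2$ restricted to the square $[0,n]^2$ is an $n^2$-fold branched covering in the sense of integration, so
\bea
\nonumber
n^2\int_{\T^2}f^*\sigma \;=\; \int_{[0,n]^2}\widetilde f^*\widetilde\sigma \;=\; \int_{[0,n]^2}\widetilde f^*d\theta \;=\; \int_{\partial[0,n]^2}\widetilde f^*\theta,
\eea
where the last equality is Stokes's theorem.

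The next step is to estimate the boundary term. Since $f$ is smooth on the compact manifold $\T^2$ and $\widetilde f$ is a lift of $f$ through local isometries, the pointwise operator norm $\|d\widetilde f\|$ is uniformly bounded by $K_1:=\max_{\T^2}\|df\|<\infty$. Therefore the four edges of $\widetilde f(\partial[0,n]^2)\subset\widetilde N$ have total $\widetilde g$-length at most $4K_1 n$. Using the bounded primitive $\theta$ with $\|\theta\|_\infty:=\sup|\theta|_{\widetilde g}<\infty$, we obtain
\bea
\nonumber
\left|\int_{\partial[0,n]^2}\widetilde f^*\theta\right| \;\leq\; \|\theta\|_\infty\cdot\mathrm{length}\bigl(\widetilde f(\partial[0,n]^2)\bigr) \;\leq\; 4K_1\|\theta\|_\infty\, n.
\eea
Combining the two displays yields $n^2\bigl|\int_{\T^2}f^*\sigma\bigr|\leq 4K_1\|\theta\|_\infty\, n$ for every $n$, so letting $n\to\infty$ forces $\int_{\T^2}f^*\sigma=0$ and the lemma follows.

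The main conceptual point (and the only place the hypothesis $u_\sigma\sim 1$ is actually used) is the existence of a \emph{globally bounded} primitive $\theta$ of $\widetilde\sigma$: this is what makes the boundary integral grow at most linearly in $n$ while the interior integral grows quadratically. Without boundedness one would at best get sublinear growth of $\theta$, which is insufficient to beat $n^2$; this is why the argument is genuinely specific to the $u_\sigma\sim 1$ regime and would fail for the merely $u_\sigma\lesssim t$ regime used in the quadratic isoperimetric inequality of Lemma \ref{thm:isoperimetric}. No subtle compactness or regularity issues arise beyond this, so I expect the write-up to be short.
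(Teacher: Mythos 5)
Your proof is correct, but it follows a genuinely different route from the paper. The paper's argument is group-theoretic: it observes that $G=f_*(\pi_1(\T^2))$ is amenable (being a quotient of $\Z^2$) and invokes Lemma 5.3 of \cite{Pat06}, which uses an averaging procedure over the amenable group to replace the bounded primitive $\theta$ of $\widetilde\sigma$ by a $G$-invariant one; this invariant primitive descends and yields an honest primitive of $f^*\sigma$ on $\T^2$. Your argument instead detects exactness cohomologically, via $\H^2(\T^2;\R)\cong\R$, and kills the integral $\int_{\T^2}f^*\sigma$ by the Stokes--scaling comparison: area of $[0,n]^2$ grows like $n^2$ while the boundary contribution, controlled by $\|\theta\|_\infty$ and $\max\|df\|$, grows only like $n$. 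In effect you are exploiting the amenability of $\Z^2$ concretely, through the F\o lner property of the exhaustion $[0,n]^2$, rather than abstractly through an averaging lemma. What each approach buys: yours is elementary and self-contained (no appeal to \cite{Pat06}), and your closing remark correctly identifies why boundedness of $\theta$ --- rather than mere linear growth --- is essential; the paper's version is shorter on the page, produces an explicit primitive of $f^*\sigma$ rather than just the vanishing of a cohomology class, and generalizes immediately to maps from any closed manifold whose fundamental group has amenable image, whereas your argument is tied to the specific covering $\R^2\to\T^2$ (though that is all the lemma requires). One minor point worth a sentence in a write-up: passing from $u_\sigma\sim 1$ to a single globally bounded primitive $\theta$ requires a small limiting argument over the radii $s$, but the paper itself takes this for granted (indeed the sets $\Mp(N)$ are defined directly by $\|\theta\|_\infty<\infty$), so you are not assuming more than the paper does.
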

\begin{proof}
Consider $G:=f_*(\pi_1(\T^2))\leq\pi_1(N)$.
Then $G$ is amenable, since $\pi_1(\T^2)=\Z^2$, which is amenable.
Now Lemma 5.3 in \cite{Pat06} tells us that since $\|\theta\|_\infty<\infty$,
we can replace $\theta$ by a $G$-invariant primitive $\theta'$ of $\widetilde\sigma$,
which descends to a primitive $\theta''\in\Om^1(\T^2)$ of $f^*\sigma$.
\end{proof}

\begin{Rmk}\label{rmk:fhc}
Given a free homotopy class $\nu\in[S^1,T^*N]$, fix a reference loop
$v_{\nu}=(q_\nu,p_\nu)\in\L^\nu_{T^*N}$. Let $Z$ be a cylinder
$S^1\times[0,1]$ with two boundary components $\p'Z$ with the
boundary orientation and $\p''Z$ with the opposite boundary
orientation. Choose $\overline v:Z\to T^*N$ any smooth map such that
$\overline v|_{\p'Z}=v$ and $\overline v|_{\p''Z}=v_\nu$. Then
thanks to the previous lemma, the integral $\int_Z\overline
v^*\tau^*\sigma$ is independent of the choice of $\overline v$.
Similarly given any $q\in\L^{\tau\nu}_N$, let $\overline q:Z\to N$
denote any smooth map such that $\overline q|_{\p'Z}=q$ and
$\overline q|_{\p''Z}=q_\nu$. Then the integral $\int_Z\overline
q^*\sigma$ is independent of the choice of $\overline q$. Note that
in particular if $q=\tau\circ v$ then
\[
\int_Z\overline v^*\tau^*\sigma=\int_Z\overline q^*\sigma.
\]
In particular, let $\sigma\in\Om^2(N)$ be a weakly exact 2-form satisfying $u_\sigma\sim 1$,
then the twisted cotangent bundle $(T^*N,\om_\sigma)$ is a {\em symplectically atoroidal manifold}.
Moreover, the Rabinowitz action functional
\[
\A_{\om_\sigma}:\L\times\R\to\R
\]
\[
\A_{\om_\sigma}(v,\eta):=\A_{H,\om_\sigma}(v,\eta)=\int_Z\overline
v^*\om_\sigma-\eta\int_0^1H(v(t))dt
\]
is well-defined, independent of the choice of $\overline v$. In the
special case, where $\nu=0$ is the trivial free homotopy class we
choose $v_\nu$ just a constant loop. In this case the cylinder $Z$
can be replaced by a filling disk $\D^2$ for the loop $v$.
\end{Rmk}

\begin{Def}
A closed hypersurface $\Sigma$ in a symplectic manifold $(M,\om)$ is called {\em virtually contact},
if there is a covering $p:\widehat M\to M$ and a primitive $\lambda\in\Om^1(\widehat\Sigma)$ of $p^*\om$ such that
\bea\label{eqn:vcp}
\sup_{x\in\widehat{\Sigma}}|\lambda_x|\leq C<\infty,\qquad
\inf_{x\in\widehat{\Sigma}}\lambda(R)\geq\mu>0,
\eea
where $|\cdot|$ is the lifting of a metric on $\Sigma$ and $R$ is the pullback of a vector field generating $\ker(\om|_{\Sigma})$.
\end{Def}

\begin{Def}
A closed hypersurface $\Sigma$ in a symplectic manifold $(M,\om)$ is called
{\em virtual restricted contact}, if there is a covering $p:\widehat M\to M$
and a primitive $\lambda\in\Om^1(\widehat M)$ of $p^*\om$ such that
$\lambda$ satisfy (\ref{eqn:vcp}) again on $\widehat\Sigma$.
\end{Def}

\begin{Rmk}
A {\em virtual restricted contact homotopy} is a smooth homotopy
$(\Sigma_t,\lambda_t)\subset(M,\om)$ of virtual restricted contact
hypersurfaces with the corresponding 1-forms on the covers such that
the preceding conditions hold with constants $C,\mu$ independent of
$t$. $\RFH(\Sigma,M)$ is defined for each virtual restricted contact
hypersurface $\Sigma$ and is invariant under virtual restricted
contact homotopies. For a twisted cotangent bundle
$(T^*N,\om_\sigma)$ with any $k\in\R$ above Ma\~n\'e critical value
$c=c(g,\sigma,U)$ the hypersurface $\Sigma_k=H^{-1}_U(k)\subset
T^*N$ is virtual restricted contact, see \cite{CFP09}.
\end{Rmk}

\section{Continuation homomorphism in RFH for symplectic deformations}
Let us begin with the {\em defining Hamiltonian} $H$ of the virtual restricted contact hypersurface $\Sigma_k\subset T^*N$
\beqn
H:=H_{U,k,\xi}=\beta_{\xi}\circ (H_U-k)
\eeq
where, $\beta_{\xi}(t)$ is a smooth cut-off function satisfying $0\leq\dot{\beta}_{\xi}\leq1$,
\beqn
\beta_{\xi}(t)=
\left\{
\begin{array}{rcr}
t&\text{ if }&|t|\leq\xi-\epsilon \\
\xi&\text{ if }&t\geq\xi+\epsilon \\
-\xi&\text{ if }&-t\geq\xi+\epsilon
\end{array}
\right.,
\qquad \epsilon=\min\left\{1/3,\xi/3\right\}.
\eeq
Now we define the Rabinowitz action functional given by
\[\A_{\om_\sigma}:\L\times\R\to\R\]
\beqn\label{eq:rfhaf}
\A_{\om_\sigma}(v,\eta):=\A_{H,\om_{\sigma}}(v,\eta)=\int_{Z}\overline{v}^*\om_{\sigma}-\eta\int_0^1H(v(t))dt,
\eeq
where $\overline{v}, Z$ are given in Remark \ref{rmk:fhc}.

In this section, we consider the canonical cotangent bundle $(T^*N,\om_0)$
and the twisted cotangent bundle $(T^*N,\om_{\sigma})$
with the virtual restricted contact hypersurface
$\Sigma_k=H^{-1}(0)=H^{-1}_U(k)$ where $k>c(g,\sigma,U)$
and $H_U(q,p)=\frac{1}{2}|p|_g^2+U(q)$.
For convenience, let us define the following sets
\bean
&\Mp(N)=\{\sigma\in\Om^2(N)\ |\ \widetilde\sigma=d\theta,\ \|\theta\|_{\infty}<\infty\}; \\
&\Om^{\Mp}(T^*N)=\{\om_{\sigma}\in\Om^{2}(T^*N)\ |\ \sigma\in\Mp(N)\}; \\
&\Om^{\Mp}(\Sigma_k)=\{\om_{\sigma}\in\Om^{\Mp}(T^*N)\ |\ k>c(g,\sigma,U)\}; \\
&\Om^{\Mp}_{\rm reg}(\Sigma_k)=\{\om_{\sigma}\in\Om^{\Mp}(\Sigma_k)\
|\ \A_{\om_\sigma}:\L\times\R\to\R\text{ is Morse-Bott}\}. \eea Note
that $\Om^{\Mp}(\Sigma_k)$ is convex. Indeed this follows from the
following estimate for $t \in [0,1]$ and for primitives $\theta_1$
and $\theta_2$
\begin{eqnarray*}
|t\theta_1+(1-t)\theta_2|^2 &\leq&
t^2|\theta_1|^2+2t(1-t)|\theta_1||\theta_2|+(1-t)^2|\theta_2|^2\\
&\leq&t^2|\theta_1|^2+t(1-t)|\theta_1|^2+t(1-t)|\theta_2|^2+(1-t)^2
|\theta_2|^2\\
&=&t|\theta_1|^2+(1-t)|\theta_2|^2.
\end{eqnarray*}
It is known that for surfaces and vanishing potential the set
$\Om^{\Mp}_{\rm reg}(\Sigma_k)$ is dense in $\Om^{\Mp}(\Sigma_k)$
thanks to work of Miranda, see \cite{Mir}. In higher dimensions this
seems to be an open problem, although it would be very surprising if
it failed.


For a pair $(\om_0,\om_{\sigma})$ of $\Om_{\rm
reg}^{\Mp}(\Sigma_k)$, we construct the continuation homomorphism
\[
\widetilde{\Psi_{\om_0}^{\om_{\sigma}}}_*:\RFHb_*(\Sigma_k,\om_0)\to
\RFHb_*(\Sigma_k,\om_{\sigma}),
\]
by counting solutions of an $s$-dependent Rabinowitz Floer equation.
Before the construction, we must check the $L_{\infty}$-bound of the
Lagrange multiplier $\eta$ in the case of a twisted cotangent bundle with virtual restricted contact hypersurface.
The proof of the following proposition proceeds as \cite{CF09} for the restricted contact type case.
It was already used with no explicit proof in \cite{CFP09}.
For the readers convenience we include a proof here.

\begin{Prop}\label{prop:lagbd}
Let $(T^*N,\om_{\sigma})$ be a twisted cotangent bundle
with a virtual restricted contact hypersurface $\Sigma_k=H^{-1}(0)$ where $k>c$.
Then there exist constants $\epsilon>0$ and $\overline{c}<\infty$ such that the following holds
\beq\label{eqn:lagbd}
\|\nabla\A_{\om_\sigma}(v,\eta)\|\leq\epsilon \Longrightarrow |\eta|\leq \overline{c}(|\A_{\om_\sigma}(v,\eta)|+1).
\eeq
\end{Prop}
Before embarking on the proof of the Proposition \ref{prop:lagbd}, we first explain as a
warm-up an extremal case of it, namely instead of looking at almost
critical points we consider critical points themselves.

\begin{Lemma}
Under the same assumptions as in Proposition \ref{prop:lagbd} for $(v,\eta)\in\Crit(\A_{\om_\sigma}|_{\L^0\times\R})$ we have
\[|\A_{\om_\sigma}(v,\eta)|\geq\frac{|\eta|}{c'}\]
where $c'>0$.
\end{Lemma}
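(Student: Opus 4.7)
The plan is to exploit the virtual restricted contact cover of $(T^*N,\om_\sigma)$ to express the action purely as a boundary integral of the primitive $\lambda$. At a critical point $(v,\eta)\in\Crit(\A_{\om_\sigma}|_{\L^0\times\R})$, the equations $\partial_t v=\eta X_H(v)$ and $\int_0^1 H(v)\,dt=0$, together with the explicit form $H=\beta_\xi\circ(H_U-k)$ of the defining Hamiltonian, force $v(S^1)\subset\Sigma_k$ (since $\beta_\xi$ vanishes only at $0$). On $\Sigma_k$ the cut-off $\beta_\xi$ is the identity near $0$, so $X_H$ coincides with the characteristic vector field $R$ generating $\ker(\om_\sigma|_{\Sigma_k})$, and $H(v(t))\equiv 0$. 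The Hamiltonian term in the action therefore vanishes and
\[
\A_{\om_\sigma}(v,\eta)=\int_D \overline v^*\om_\sigma.
\]

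Since $v\in\L^0$ is contractible, I take $\overline v$ to be a filling disk $D^2\to T^*N$. The virtual restricted contact hypothesis provides a cover $p:\widehat{T^*N}\to T^*N$ and a primitive $\lambda\in\Om^1(\widehat{T^*N})$ of $p^*\om_\sigma$ with $|\lambda|\leq C$ and $\lambda(R)\geq\mu>0$ along $\widehat{\Sigma_k}$. Simple connectedness of $D^2$ lifts $\overline v$ to $\widehat{\overline v}:D^2\to\widehat{T^*N}$, and Stokes' theorem gives
\[
\int_D \overline v^*\om_\sigma
=\int_{D^2}\widehat{\overline v}^*d\lambda
=\int_0^1\lambda_{\widehat v(t)}\bigl(\dot{\widehat v}(t)\bigr)\,dt,
\]
where $\widehat v$ is the lift of $v$ along $\partial D^2$. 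Since $\widehat v$ lies on $\widehat{\Sigma_k}$ and satisfies $\dot{\widehat v}=\eta\, R(\widehat v)$, substituting and using $\lambda(R)\geq\mu$ yields
\[
|\A_{\om_\sigma}(v,\eta)|=|\eta|\int_0^1\lambda_{\widehat v(t)}\bigl(R(\widehat v(t))\bigr)\,dt\geq\mu|\eta|,
\]
so taking $c':=1/\mu$ gives the claim. The case $\eta=0$ is handled trivially: such critical points are constant loops on $\Sigma_k$, and both sides of the inequality vanish.

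There is no serious analytic difficulty; the argument is a direct application of Stokes on the virtual restricted contact cover. The essential point is the restriction to the contractible component $\L^0$, which guarantees the existence of the filling disk and of its lift $\widehat{\overline v}$. For a non-trivial free homotopy class the lift $\widehat v$ would in general fail to close up in $\widehat{T^*N}$, and the argument would have to be adapted via a choice of reference loop and cylindrical cap as in Remark \ref{rmk:fhc}.
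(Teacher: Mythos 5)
Your argument is correct and is essentially the paper's own proof: both insert the critical point equations to kill the Hamiltonian term, lift the filling disk to the virtual restricted contact cover, apply Stokes to reduce the action to $\eta\int_0^1\lambda(R)\,dt$, and conclude from the lower bound $\lambda(R)\geq\mu$ in the virtual restricted contact condition, with $c'=1/\mu$. Your extra remarks on why $v(S^1)\subset\Sigma_k$, the $\eta=0$ case, and the role of contractibility are just more explicit versions of steps the paper leaves implicit.
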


\begin{proof}
Since we are considering the component of contractible loops we only
have to choose a filling disc $\overline{v}\colon \D^2 \to T^*N$ for
the loop $v$. Inserting (\ref{eq:crit1}) into $\A_{\om_\sigma}$ and
using the assumption of virtual restricted contact type
\bea\label{eqn:aas43}
|\A_{\om_\sigma}(v,\eta)|&=\left|\int_{\D^2}\overline{v}^*\om_{\sigma}\right|
=\left|\int_{\D^2}\tilde{\overline{v}}^*\pi^*\om_{\sigma}\right|
=\left|\int_{\D^2}\tilde{\overline{v}}^*d\lambda_{\sigma}\right|
=\left|\int_{S^1}\tilde{v}^*\lambda_{\sigma}\right| \\
&=\left|\int_0^1\lambda_{\sigma}(\partial_t\tilde{v})\right|
=\left|\int_0^1\lambda_{\sigma}(\eta\widetilde{X_H^{\om_{\sigma}}}(\tilde{v}))\right| \\
&=\left|\eta\int_0^1\lambda_{\sigma}(\widetilde{X_H^{\om_{\sigma}}}(\tilde{v}))\right| \\
&\geq\frac{|\eta|}{c'}
\eea
where $\tilde{\overline{v}},\;\widetilde{X_H^{\om_{\sigma}}}$ are lifts of $\overline{v},\;X_H^{\om_{\sigma}}$ respectively
to the cover $\pi:\widetilde{\Sigma}\to\Sigma$.
The constant $c'>0$ exists by the second inequality in (\ref{eqn:vcp}).
\end{proof}

\begin{proof}[{\bf Proof of Proposition \ref{prop:lagbd}}]
The proof consists of 4 steps. \vspace{3mm}
\\
{\bf Step 1} : {\em There exist $\delta>0$, a constant $c'>0$, a
covering $p \colon \widehat{M} \to M=T^*N$ and a primitive $\lambda
\in \Omega^1(\widehat{M})$ of $p^*\omega_\sigma$ such that on
$U_\delta =H^{-1}(-\delta,\delta)$ we have the estimates
$\lambda(\widetilde{X_H^{\om_{\sigma}}})>\frac{1}{2c'}+\delta$ and
$||\lambda||_\infty<\infty$ where the $L^\infty$-norm is taken with
respect to the lift of a metric on $N$.}
\\ \\
The assertion of Step~1 is surely true for $\delta=0$ for a
primitive $\lambda_0 \in \Omega^1(\widehat{M})$ just by definition
of virtual restricted contact type. Let
$\widehat{\Sigma}_k=p^{-1}(\Sigma_k)$ be the lift of $\Sigma_k$.
Thank to the bounds in the virtual contact assumption we can find
$\epsilon>0$ and a diffeomorphism $\Psi$ from
$\widehat{\Sigma}_k\times (-\epsilon,\epsilon)$ to an open
neighbourhood $U$ of $\widehat{\Sigma}_k$ in $\widehat{M}$ such that
$\Psi$ pulls back the symplectic form $\omega_\sigma$ on $U$ to the
symplectic form $\omega=d(r\lambda_0|_{\widehat{\Sigma}_k})$ on
$\widehat{\Sigma}_k\times (-\epsilon,\epsilon)$. Now choose
$\delta>0$ so small that $U_\delta \subset
\Psi\big(\widehat{\Sigma}_k\times (-\epsilon/2,\epsilon/2)\big)$ and
the bounds required in Step~1 hold for $\Psi_*\lambda_1$ with
$\lambda_1=r\lambda_0|_{\widehat{\Sigma}_k}$. Since $\lambda_1$ and
$\Psi^*\lambda_0$ are two primitives of $\Psi^*\omega_\sigma$ which
coincide on $\widehat{\Sigma}_k$ we conclude that there exists a
function $f \in
C^\infty\big(\widehat{\Sigma}_k\times(-\epsilon,\epsilon)\big)$ such
that $\lambda_1=\Psi^*\lambda_0+df$. Now choose a cutoff function
$\beta \in C^\infty\big(\widehat{\Sigma}_k\times
(-\epsilon,\epsilon) \big)$ with the property that $\beta(x,r)=1$
for $x \in \widehat{\Sigma}_k$ and $|r| \leq \epsilon/2$ and
$\beta(x,r)=0$ if $r \geq 3\epsilon/4$. Finally set
$\lambda=\lambda_0 +\Psi_*d(\beta f)$ on
$\Psi\big(\widehat{\Sigma}_k\times (-\epsilon,\epsilon)\big)$. This
finishes the proof of Step~1. For the next step we fix $\nu\in
[S^1,T^*N]$.
\\ \\
{\bf Step 2} : {\em There exist $\delta>0$ and a constants
$c_{\delta}<\infty$ and $a^\nu \in \mathbb{R}$ with the following
property. For every $(v,\eta)\in\L^\nu\times\R$ such that $v(t)\in
U_{\delta}=H^{-1}(-\delta,\delta)$ for every $t\in\R/\Z$, the
following estimate holds:}
\[
|\eta|\leq2c'|\A_{\om_\sigma}(v,\eta)|+c_{\delta}\|\nabla\A_{\om_\sigma}(v,\eta)\|+2c'|a^\nu|.
\]
Let $c'$, $\delta$ and $\lambda$ be as in Step~1 and set
\beq\label{eqn:c_deltaes}
c_{\delta}=2c'\|\lambda|_{\pi^{-1}(U_{\delta})}\|_{\infty}<\infty.
\eeq We estimate \bean
|\A_{\om_\sigma}(v,\eta)|&=\left|\int_0^1\lambda(\tilde{v})(\partial_t\tilde{v})
-\underbrace{\int_0^1\lambda(\tilde{v}_\nu)(\partial_t\tilde{v}_\nu)}_{:=a^\nu}
-\eta\int_0^1H(v(t))dt \right| \\
&=\left|\eta\int_0^1\lambda(\tilde{v})(\widetilde{X_H^{\om_{\sigma}}}(\tilde{v}))
+\int_0^1\lambda(\tilde{v})(\partial_t\tilde{v}-\eta\widetilde{X_H^{\om_{\sigma}}}(\tilde{v}))
-a^\nu
-\eta\int_0^1H(v(t))dt \right| \\
&\geq\left|\eta\int_0^1\lambda(\tilde{v})(\widetilde{X_H^{\om_{\sigma}}}(\tilde{v}))\right|
-\left|\int_0^1\lambda(\tilde{v})(\partial_t\tilde{v}-\eta\widetilde{X_H^{\om_{\sigma}}}(\tilde{v}))\right|
-\left|\eta\int_0^1H(v(t))dt \right|
-|a^\nu|\\
&\geq|\eta|\left(\frac{1}{2c'}+\delta\right)-\frac{c_{\delta}}{2c'}\|\partial_t\tilde{v}-\eta\widetilde{X_H^{\om_{\sigma}}}(\tilde{v})\|_1-|\eta|\delta-|a^\nu| \\
&\geq\frac{|\eta|}{2c'}-\frac{c_{\delta}}{2c'}\|\partial_t\tilde{v}-\eta\widetilde{X_H^{\om_{\sigma}}}(\tilde{v})\|_2-|a^\nu| \\
&\geq\frac{|\eta|}{2c'}-\frac{c_{\delta}}{2c'}\|\nabla\A_{\om_\sigma}(v,\eta)\|-|a^\nu|,
\eea where $v_\nu\in\L^\nu$ is a reference loop defined in Remark
\ref{rmk:fhc}. This proves Step 2.
\\ \\
{\bf Step 3} : {\em For each $\delta>0$, there exists
$\epsilon=\epsilon(\delta)>0$ such that if
$\|\nabla\A_{\om_\sigma}(v,\eta)\|\leq\epsilon$ then $v(t)\in
U_{\delta}$ for every $t\in[0,1]$.}
\\ \\
First assume that $v\in\L$ has the property that there exist $t_0, t_1\in\R/\Z$ such that
$|H(v(t_0))|\geq\delta$ and $|H(v(t_1))|\leq\delta/2$. We claim that
\beq\label{2-1}
\|\nabla\A_{\om_\sigma}(v,\eta)\|\geq\frac{\delta}{2\kappa}
\eeq
for every $\eta\in\R$, where
\[
\kappa:=\max_{x\in\overline{U}_{\delta}}\|\nabla H(x)\|.
\]
To see this, assume without loss of generality that $t_0<t_1$ and
$\delta/2\leq|H(v(t))|\leq\delta$ for all $t\in[t_0,t_1]$. Then we estimate
\bean
\|\nabla\A_{\om_\sigma}(v,\eta)\|&\geq \sqrt{\int_0^1\|\p_tv-\eta X_H^{\om_{\sigma}}(v)\|^2dt} \\
&\geq \int_0^1\|\p_tv-\eta X_H^{\om_{\sigma}}(v)\|dt \\
&\geq \int_{t_0}^{t_1}\|\p_tv-\eta X_H^{\om_{\sigma}}(v)\|dt \\
&\geq \frac{1}{\kappa}\int_{t_0}^{t_1}\|\nabla H(v)\|\cdot\|\p_tv-\eta X_H^{\om_{\sigma}}(v)\|dt \\
&\geq \frac{1}{\kappa}\int_{t_0}^{t_1}|\langle\nabla H(v),\p_tv-\eta X_H^{\om_{\sigma}}(v)\rangle|dt \\
&= \frac{1}{\kappa}\int_{t_0}^{t_1}|\langle\nabla H(v),\p_tv\rangle|dt \\
&= \frac{1}{\kappa}\int_{t_0}^{t_1}|dH(v)\p_tv|dt \\
&= \frac{1}{\kappa}\int_{t_0}^{t_1}|\p_tH(v)|dt \\
&\geq \frac{1}{\kappa}\left|\int_{t_0}^{t_1}\p_tH(v)dt\right| \\
&= \frac{1}{\kappa}|H(v(t_1))-H(v(t_0))| \\
&\geq \frac{1}{\kappa}\left(|H(v(t_1))|-|H(v(t_0))|\right) \\
&\geq \frac{\delta}{2\kappa}. \eea Now assume that $v\in\L$ has the
property that $v(t)\in T^*N\setminus U_{\delta/2}$ for every
$t\in[0,1]$. In this case we estimate \beq\label{2-2}
\|\nabla\A_{\om_\sigma}(v,\eta)\|\geq\left|\int_0^1H(v(t))dt\right|\geq\frac{\delta}{2}
\eeq for every $\eta\in\R$. From (\ref{2-1}) and (\ref{2-2}) Step 3
follows with
\[
\epsilon=\frac{\delta}{2\max\{1,\kappa\}}.
\]
\\ \\
{\bf Step 4} : {\em We prove the proposition.}
\\ \\
Choose $\delta$ as in Step 1, $\epsilon=\epsilon(\delta)$ as in Step
3 and
\[
\overline{c}=\max\{2c',2c_{\delta}\epsilon,4c'|a^\nu|\}.
\]
Assume that $\|\nabla\A_{\om_\sigma}(v,\eta)\|\leq\epsilon$ then
\[
|\eta|\leq2c'|\A_{\om_\sigma}(v,\eta)|+c_{\delta}\|\nabla\A_{\om_\sigma}(v,\eta)\|+2c'|a^\nu|
\leq \overline{c}(|\A_{\om_\sigma}(v,\eta)|+1).
\]
This proves the Proposition \ref{prop:lagbd}.
\end{proof}

\begin{Rmk}\label{rmk:constconti}
A careful inspection of the proof of Proposition \ref{prop:lagbd}
shows that the constant $c',\delta,c_{\delta},\epsilon(\delta),$ and
$\overline{c}$ continuously depend on the 2-form
$\sigma\in\Om^2(M)$. In particular, Proposition \ref{prop:lagbd} can
be extended to families of symplectic forms.
\end{Rmk}

\begin{Lemma}[Linear isoperimetric inequality]\label{lem:linhmtp}
Let $\sigma\in\Om^2(N)$ be a weakly exact 2-form and $u_{\sigma}\sim 1$, then
\[
\int_Z\overline q^*\sigma \leq C\left(\int_0^1|\p_tq|dt+1\right)
\]
where, $\overline q,Z$ are the same as in Remark \ref{rmk:fhc} and
$C=C(N,g,\sigma,q_\nu)$.
\end{Lemma}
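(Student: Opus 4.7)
The strategy mirrors the proof of the quadratic isoperimetric inequality (Lemma~\ref{thm:isoperimetric}), the key difference being that the hypothesis $u_\sigma\sim 1$ provides a \emph{bounded} (rather than linearly growing) primitive of $\widetilde\sigma$, and that we must work in the context of a possibly non-trivial free homotopy class.

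First I would pick $\theta\in\L_\sigma$ with $\|\theta\|_\infty:=\sup_{z\in\widetilde N}|\theta_z|_{\widetilde g}<\infty$, whose existence is granted by $u_\sigma\sim 1$. Next, let $G\le\pi_1(N)$ be the cyclic subgroup generated by a representative of the free homotopy class of $q$ (which, by hypothesis, is also the class of the reference loop $q_\nu$). Since $G$ is amenable, the averaging argument of Paternain \cite{Pat06} used in the proof of Lemma~\ref{lem:ato} produces a $G$-invariant primitive $\theta'$ of $\widetilde\sigma$ satisfying $\|\theta'\|_\infty\le\|\theta\|_\infty$. Being $G$-invariant, $\theta'$ descends to a bounded 1-form $\theta''$ on the intermediate cover $\widehat N:=\widetilde N/G$ which is a primitive of the pullback of $\sigma$.

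By construction, both $q$ and $q_\nu$ lift to genuine loops $\widehat q,\widehat{q_\nu}\colon S^1\to\widehat N$, and the cylinder $\overline q$ lifts to $\widehat{\overline q}\colon D\to\widehat N$. Stokes' theorem then gives
\[
\int_D\overline q^*\sigma=\int_D\widehat{\overline q}^*d\theta''=\int_{S^1}\widehat q^*\theta''-\int_{S^1}\widehat{q_\nu}^*\theta'',
\]
and bounding each term via $\|\theta''\|_\infty<\infty$ yields
\[
\int_D\overline q^*\sigma\le\|\theta\|_\infty\int_0^1|\p_tq|_g\,dt+C_{q_\nu},
\]
where $C_{q_\nu}:=\bigl|\int_{S^1}\widehat{q_\nu}^*\theta''\bigr|$ depends only on $N,g,\sigma$, and $q_\nu$. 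Setting $C:=\max\{\|\theta\|_\infty,C_{q_\nu}\}$ produces the claim.

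The main obstacle is the amenability-averaging step producing the descending invariant primitive $\theta''$; once this is granted the rest reduces to Stokes and a trivial $L^\infty$--$L^1$ estimate. This averaging input is, however, directly parallel to the argument invoked for tori in Lemma~\ref{lem:ato}, so I do not anticipate any genuinely new technical difficulties beyond verifying that the cyclic subgroup $G$ is a legitimate input to Paternain's lemma and that the resulting form genuinely descends to the intermediate cover $\widehat N$.
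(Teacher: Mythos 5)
Your argument is correct, but it handles the topological bookkeeping differently from the paper. The paper lifts everything to the universal cover, where $q$ and $q_\nu$ need not close up: it cuts the cylinder $D$ open into a rectangle $R$, applies Stokes there, and must therefore estimate \emph{four} boundary terms --- the two lifted paths $\widetilde q,\widetilde q_\nu$ plus two connecting paths $\underline r,\overline r$, which it arranges to be length-minimizing so their contribution is controlled by $\mathrm{diam}(N)$. You instead pass to the intermediate cover $\widehat N=\widetilde N/G$ associated to the cyclic (hence amenable) group $G$ generated by the class of $q$, invoke the Paternain averaging already used in Lemma~\ref{lem:ato} to obtain a bounded $G$-invariant primitive descending to $\widehat N$, and then apply Stokes directly on the lifted cylinder, leaving only two boundary terms. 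What your route buys is a cleaner Stokes step with no connecting paths (and no appeal to the freedom in choosing the filling cylinder, which the paper uses somewhat informally to make $\underline r,\overline r$ short); what it costs is a heavier reliance on the amenability-averaging machinery and two points you should make explicit: (i) the whole map $\overline q\colon D\to N$, not just its boundary loops, lifts to $\widehat N$ --- this follows from $\overline q_*\pi_1(D)$ being the cyclic group generated by $[q]$, conjugate to $G$, with basepoints chosen consistently; and (ii) the averaged primitive is a genuine smooth bounded $1$-form with $\|\theta'\|_\infty\leq\|\theta\|_\infty$, which holds because deck transformations are isometries, exactly as in Paternain's Lemma~5.3. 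With those two remarks supplied, your proof is complete and yields the same constant structure $C=C(N,g,\sigma,q_\nu)$ as the paper's.
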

\begin{proof}
The proof uses the same argument as in Lemma
\ref{thm:isoperimetric}. Let $\widetilde{\overline q}:Z\to
\widetilde N$ be the lifting of $\overline q$ to the universal cover
and $\theta\in\Om^1(\widetilde N)$ be a bounded primitive of
$\widetilde\sigma$ as in Lemma \ref{lem:ato}. Then we get \bean
\int_Z\overline q^*\sigma&=\int_Z\widetilde{\overline q}^*\widetilde\sigma \\
&=\int_Z\widetilde{\overline q}^*d\theta \\
&=\int_R\widetilde q^*\theta \\
&\leq\left|\int_0^1\widetilde q^*\theta\right|+\left|\int_0^1\widetilde q_\nu^*\theta\right|
+\left|\int_0^1\underline r^*\theta\right|+\left|\int_0^1\overline r^*\theta\right| \\
&\leq\|\theta\|_\infty\left(\int_0^1|\p_t q|dt+\int_0^1|\p_t
q_\nu|dt +\int_0^1|\p_t\underline r|dt+\int_0^1|\p_t\overline
r|dt\right), \eea where $R$ is a rectangle in $\widetilde N$ which
consists of $\widetilde q,\;\widetilde q_\nu,\;\underline r$, and
$\overline r$. Here, $\underline r:[0,1]\to\widetilde N$ is a path
from $\widetilde q(0)$ to $\widetilde q_\nu(0)$ and $\overline
r:[0,1]\to\widetilde N$ is a path from $\widetilde q(1)$ to
$\widetilde q_\nu(1)$. Since $\int_Z\overline q^*\sigma$ does not
depend on the choice of $Z$, we may assume that $\underline r$,
$\overline r$ are length minimizing curves on $\widetilde N$. This
implies that $\underline r$, $\overline r$ are geodesics contained
in a fundamental domain in $\widetilde N$ or
\[
\int_0^1|\p_t\underline r|dt\leq\text{diam}(N),\quad\int_0^1|\p_t\overline r|dt\leq\text{diam}(N).
\]
Set
$C=\max\left\{\|\theta\|_\infty,2\|\theta\|_\infty\int_0^1|\p_tq_\nu|dt,4\|\theta\|_\infty
\text{diam}(N)\right\}$ then we get the conclusion.
\end{proof}


\begin{Rmk}\label{rmk:conti_const}
Note that $C$ converges to $0$ as $|\sigma|_g\to0$.
\end{Rmk}

\begin{Rmk}\label{rmk:lip_pro}
If we consider the family of symplectic forms on $T^*N$
\[
\om_s=\om_0+\beta(s)\tau^*\sigma\in\Om^{\Mp}(\Sigma_k),\qquad
\forall s\in\R,
\]
where $\beta(s)\in C^\infty(\R,[0,1])$ is a cut-off function
satisfying $\beta(s)=1$ for $s\geq1$, $\beta(s)=0$ for $s\leq0$, and
$0\leq\dot{\beta}(s)\leq2$, then we obtain the estimate \bean \left|
\int_Z\overline{v}^*\dot\om_s \right|
&\leq \left| \int_Z\overline{v}^*\dot\beta(s)\tau^*\sigma\right| \\
&=\dot\beta(s)\left| \int_Z\overline{v}^*\tau^*\sigma \right| \\
&\leq C\dot\beta(s)\left(\int_{S^1} |\partial_t v(t)| dt+1\right),
\eea
for some $C=C(N,g,\sigma,q_\nu)$ given in Lemma \ref{lem:linhmtp}.
\end{Rmk}

\begin{Prop}\label{prop:lagbd2}
Let $w=(v,\eta)\in C^{\infty}(\R\times S^1,T^*N)\times C^\infty(\R,\R)$ be a gradient flow line of
\[
\A_{\om(s)}(v,\eta):=\A_{H,\om_{s}}(v,\eta)=\int_{Z}\overline{v}^*\om_s-\eta\int_0^1H(x(t))dt
\]
i.e. a solution of
\beq\label{eqn:rfhgrd}
\left.
\begin{array}{cc}
\p_sv+J_{t,s}(v)\left(\p_tv-\eta X_H^{\om_s}(v) \right)=0 \\
\p_s\eta+\int_0^1H(v(t))dt=0
\end{array}
\right\}
\eeq

\beqn
\label{eqn:rfhlim}
\lim_{s \to -\infty}w(s)=w_-\in\Crit\A_{\om(0)}, \qquad
\lim_{s \to \infty}w(s)=w_+\in\Crit \A_{\om(1)},
\eeq
where $\om_s$ is same as in Remark \ref{rmk:lip_pro}.
If $|\sigma|_g$ is sufficiently small then the $L^{\infty}$-norm of $\eta$ is uniformly bounded
in terms of a constant which only depends on $w_-,w_+$.
\end{Prop}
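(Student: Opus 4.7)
The plan is to close a self-referential system of estimates on three quantities simultaneously: the energy $E(w)=\intinf\|\p_s w\|_s^2\,ds$, the pointwise action $|\A_{\om(s)}(w(s))|$ along the flow line, and $M:=\sup_{s\in\R}|\eta(s)|$. The smallness assumption on $|\sigma|_g$ enters only at the very end, via Remark \ref{rmk:conti_const}: it makes the linear isoperimetric constant $C$ of Lemma \ref{lem:linhmtp} arbitrarily small, which is precisely what breaks the circularity. By Remark \ref{rmk:constconti} we may use uniform constants $\epsilon$ and $\overline{c}$ from Proposition \ref{prop:lagbd} for the whole family $\{\om_s\}_{s\in[0,1]}$.

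First I would derive the energy identity from the positive gradient flow equation:
\[
E(w)=\A_{\om(1)}(w_+)-\A_{\om(0)}(w_-)-\intinf \dot\A_{\om(s)}(w(s))\,ds.
\]
To control the correction term I apply Remark \ref{rmk:lip_pro}, giving $|\dot\A_{\om(s)}(w(s))|\leq C\dot\beta(s)(l(v(s))+1)$ with $l(v(s))=\int_0^1|\p_t v|_s\,dt$. Rewriting the first line of (\ref{eqn:rfhgrd}) as $\p_t v=J_{t,s}(v)\p_s v+\eta X_H^{\om_s}(v)$ and using that $X_H^{\om_s}$ has a uniform bound $c_{X_H}$ on the compact support of $H$, Cauchy-Schwarz gives $l(v(s))\leq\|\p_s v(s)\|_{L^2(S^1)}+|\eta(s)|\,c_{X_H}$. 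Since $\dot\beta$ is supported on $[0,1]$ with $\dot\beta\leq 2$, one more Cauchy-Schwarz yields
\[
\Big|\intinf\dot\A_{\om(s)}(w(s))\,ds\Big|\leq 2C\bigl(E(w)^{1/2}+M\,c_{X_H}+1\bigr).
\]
Plugging this into the energy identity and absorbing via $4CE^{1/2}\leq\tfrac12 E+8C^2$ produces a linear bound $E(w)\leq E_0+E_1 M$ with $E_1=O(C)$, and then the fundamental theorem of calculus applied to $\frac{d}{ds}\A_{\om(s)}(w(s))$ gives $|\A_{\om(s)}(w(s))|\leq A_0+A_1 M$ for every $s\in\R$, again with $A_1=O(C)$.

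Finally I would bound $M$ itself. At any $s_0$ with $\|\nabla\A_{\om(s_0)}(w(s_0))\|_{s_0}\leq\epsilon$, Proposition \ref{prop:lagbd} directly gives $|\eta(s_0)|\leq\overline{c}(A_0+A_1 M+1)$. Otherwise $s_0$ lies in a connected component $(a,b)$ of the open set $\{s:\|\nabla\A_{\om(s)}(w(s))\|_s>\epsilon\}$; since $\int\|\nabla\A\|_s^2\,ds=E(w)$, this component satisfies $b-a\leq E(w)/\epsilon^2$, and at its endpoints the gradient has norm exactly $\epsilon$ so Proposition \ref{prop:lagbd} applies there. From the second equation in (\ref{eqn:rfhgrd}) we have $|\p_s\eta|\leq\|H\|_\infty$, so transporting the endpoint bound to $s_0$ gives
\[
|\eta(s_0)|\leq\overline{c}(A_0+A_1 M+1)+\|H\|_\infty\,E(w)/\epsilon^2\leq B_0+B_1 M,
\]
where $B_1=O(C)$. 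Choosing $|\sigma|_g$ so small that Remark \ref{rmk:conti_const} forces $B_1\leq\tfrac12$ and taking the supremum over $s_0$ yields $M\leq 2B_0$, a bound depending only on $w_\pm$.

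The main obstacle is exactly the circularity between the three quantities $E$, $|\A|$, and $M$, each of which appears on both sides of the natural estimates. The hypothesis of small $|\sigma|_g$ is precisely what is needed to shrink the isoperimetric constant $C$ enough so that the coefficient of $M$ in the final inequality drops below one and can be absorbed, turning the self-referential chain into a genuine \emph{a priori} bound.
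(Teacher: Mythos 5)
Your proposal is correct and follows essentially the same three-step outline as the paper's own proof: (i) an energy bound $E(w)\leq E_0+E_1M$ via the energy identity, the linear isoperimetric inequality, and the gradient flow equation substitution $\p_t v=J\p_s v+\eta X_H$; (ii) a length bound of $E(w)/\epsilon^2$ for the excursion until the gradient drops below $\epsilon$; (iii) application of Proposition~\ref{prop:lagbd} at the good time, transport via $|\p_s\eta|\leq\|H\|_\infty$, and absorption of the $O(C)$ coefficient by taking $|\sigma|_g$ small. The only cosmetic differences are that you absorb $E^{1/2}$ via Young's inequality where the paper absorbs $E$ directly (both need $C$ small), and you phrase the excursion via the connected component of $\{\|\nabla\A\|>\epsilon\}$ where the paper uses a forward stopping time $\tau(l)$ — these are equivalent.
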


\begin{proof}
We prove the proposition in three steps.

\vspace{3mm}

{\bf Step1} : Let us first {\em bound the energy of $w$ in terms of $\|\eta\|_{\infty}$.}
\bea\label{e0}
E(w)&=\intinf\|\p_sw\|_{s}^2 ds \\
&=\intinf\<\p_s w, \nabla \A_{\om(s)}(w) \rangle_{s}ds\\
&=\intinf\frac{d}{ds}\A_{\om(s)}(w)ds-\intinf\dot{\A}_{\om(s)}(w)ds \\
&=\A_{\om(1)}(w_+)-\A_{\om(0)}(w_-)-\intinf\dot{\A}_{\om(s)}(w)ds.
\eea
We estimate the third term by
\bea\label{e1}
\left| \intinf\dot{\A}_{\om(s)}(w)ds \right|
&\leq \intinf\left|\dot{\A}_{\om(s)}(w)\right|ds \\
&= \intinf\dot{\beta}(s) \left| \int_Z\overline{v}^*\tau^*\sigma \right| ds\\
&\leq \intinf \dot{\beta}(s) C\left( \int_{S^1}|\partial_{t}v|_{t,s} dt+1 \right) ds,
\eea
where $C$ is the isoperimetric constant in Remark \ref{rmk:lip_pro}
and $|\cdot|_{t,s}$ is the norm on $T^*N$ induced by the Riemannian metric $\om_s(\cdot,J_{t,s}\cdot).$
From the gradient flow equation (\ref{eqn:rfhgrd}) we get
\[
\p_tv=J_{t,s}(v)\p_sv+\eta X_H^{\om_s}(v).
\]
By putting this into (\ref{e1}), we then obtain
\bea\label{eqn:dotAbd}
\intinf\left|\dot{\A}_{\om(s)}(w)\right|ds
&\leq\intinf \dot{\beta}(s) C\left( \int_{S^1}|\partial_{t}v|_{t,s} dt +1\right) ds \\
&\leq\intinf \dot{\beta}(s) C\left( \int_{S^1}|J_{t,s}(v)\p_sv+\eta X_H^{\om_s}(v)|_{t,s} dt +1\right) ds \\
&\leq\intinf \underbrace{\dot{\beta}(s)}_{\leq 2} C\left( \int_{S^1}\left(|\p_sv|_{t,s}+|\eta|\; |X_H^{\om_s}(v)|_{t,s}\right) dt+1 \right) ds \\
&\leq2C\int_0^1 \left( \int_{S^1}\left(|\p_sv|_{t,s}^2+1+|\eta|\; |X_H^{\om_s}(v)|_{t,s}\right) dt+1 \right) ds \\
&\leq2CE(v)+4C+2C\|\eta\|_{\infty}c'' \\
&\leq2CE(w)+4C+2C\|\eta\|_{\infty}c'', \eea where
$c''=\max_{s\in[0,1]\atop v\in T^*N}|X_H^{\om_s}(v)|_{t,s}$. Note
that the maximum is attained, since by the assumption $dH$ has
compact support. Now by substituting the above equation into
(\ref{e0}), we get
\bea\label{eqn:e_bd}
E(w)&=\A_{\om(1)}(w_+)-\A_{\om(0)}(w_-)-\intinf\dot{\A}_{\om(s)}(w)ds \\
&\leq\A_{\om(1)}(w_+)-\A_{\om(0)}(w_-)+2CE(w)+4C+2C\|\eta\|_{\infty}c''
\eea
By choosing $\sigma\in\Om^2(M)$ with sufficiently small norm,
we may assume that the isoperimetric constant $C$ is less than $\frac{1}{4}$.
For simplicity, set $\Delta=\A_{\om(1)}(w_+)-\A_{\om(0)}(w_-)$, then we get
\bea\label{eqn:e_bd2}
E(w)&\leq2\A_{\om(1)}(w_+)-2\A_{\om(0)}(w_-)+8C+4C\|\eta\|_{\infty}c''\\
&=2\Delta+8C+4C\|\eta\|_{\infty}c''.
\eea
This finishes Step1.

\vspace{3mm}

{\bf Step2} : Let $\epsilon$ be as in Proposition \ref{prop:lagbd} and Remark \ref{rmk:constconti}.
For $l\in\R$ let $\tau(l)\geq0$ be defined by
\[
\tau(l):=\inf\{\tau\geq0:\|\nabla\A_{\om(s)}((v,\eta)(l+\tau))\|_s<\epsilon\}.
\]
In this step we {\em bound $\tau(l)$ in terms of $\|\eta\|_{\infty}$ for all $l\in\R$}. Namely
\bean
E(w)&=\intinf\|\p_sw\|_s^2ds\\
&=\intinf\|\nabla\A_{\om(s)}\|_s^2ds \\
&\geq\int_{l}^{l+\tau(l)}\underbrace{\|\nabla\A_{\om(s)}\|_s^2}_{\geq \epsilon^2}ds \\
&\geq\epsilon^2\tau(l)
\eea
Step1 and the above estimate finish Step2.

\vspace{3mm}

{\bf Step3} : {\em We prove the proposition.}\\
First set
\bean
\|H\|_{\infty}=\max_{x\in T^*N}|H(x)|, \quad
K=\max\{-\A_{\om(0)}(w_-),\A_{\om(1)}(w_+)\}.
\eea
By definition of $\tau(l)$, we obtain $\|\nabla\A_{\om(s)}((v,\eta)(l+\tau(l)))\|_s<\epsilon$.
Now we are able to use Proposition \ref{prop:lagbd} and
get the following estimate by using (\ref{eqn:lagbd}), (\ref{eqn:dotAbd}) and (\ref{eqn:e_bd})
\bea\label{eqn:eta_bd1}
|\eta(l+\tau(l))|
&\leq \overline{c}(|\A_{\om(s)}(w(l+\tau(l)))|+1) \\
&\leq \overline{c}\left(K+\intinf\left|\dot{\A}_{\om(s)}\right|ds+1\right) \\
&\leq \overline{c}\left(K+2CE(w)+4C+2C\|\eta\|_{\infty}c''+1\right) \\
&\leq \overline{c}\left(K+4C\Delta+16C^2+8C^2\|\eta\|_\infty c''+4C+2C\|\eta\|_{\infty}c''+1\right).
\eea
By Step2 and (\ref{eqn:e_bd}), we obtain the following inequalities
\bea\label{eqn:eta_bd2}
\left|\int_{l}^{l+\tau(l)}\dot{\eta}(s)ds\right|
&\leq \left|\int_{l}^{l+\tau(l)}\int_0^1H(v(t))dt\ ds\right|\\
&\leq \|H\|_{\infty}\tau(l) \\
&\leq \|H\|_{\infty}\frac{E(w)}{\epsilon^2} \\
&\leq \frac{\|H\|_{\infty}}{\epsilon^2}(2\Delta+8C+4C\|\eta\|_{\infty}c'').
\eea
Combining the above two estimates (\ref{eqn:eta_bd1}), (\ref{eqn:eta_bd2}),
we conclude
\bean
|\eta(l)|
&\leq |\eta(l+\tau(l))|+\left|\int_{l}^{l+\tau(l)}\dot{\eta}(s)ds\right|\\
&\leq \overline{c}\left(K+4C\Delta+16C^2+8C^2\|\eta\|_\infty c''+4C+2C\|\eta\|_{\infty}c''+1\right)\\
&\ \ \ +\frac{\|H\|_{\infty}}{\epsilon^2}(2\Delta+8C+4C\|\eta\|_{\infty}c'')  \\
&=\underbrace{\left(8\overline{c}c''C+2\overline cc''+\frac{4c''\|H\|_{\infty}}{\epsilon^2}\right)C}_{=:K'}\|\eta\|_{\infty} \\
&\ \ \ +\underbrace{\overline{c}K+4\overline{c}C\Delta+16\overline{c}C^2+4\overline cC+\overline{c}+\frac{2\|H\|_{\infty}\Delta}{\epsilon^2}+\frac{8C\|H\|_{\infty}}{\epsilon^2}}_{=:K''}.
\eea
Since the above estimate holds for all $l\in\R$
\[
\|\eta\|_{\infty}\leq K'\|\eta\|_{\infty}+K''.
\]
We can achieve that the {\em isoperimetric constant} $C$ satisfies
\beq\label{eqn:Ccond}
C\leq\frac{1}{4} \text{ \ \ and \ \ } K'\leq\frac{1}{2}
\eeq
by choosing $\sigma\in\Om^2(M)$ with small norm.
This proves the proposition.
\end{proof}

\begin{Lemma}\label{lem:awin}

Assume that the isoperimetric constant $C$ is sufficiently small,
then the following holds true.
Suppose that $w=(v,\eta)\in C^\infty(\R\times S^1,T^*N)\times C^\infty(\R,\R)$ is a gradient flow line of the
time dependent gradient $\nabla\A_{\om(s)}$ which converges asymptotically
$\lim_{s\to\pm}w(s)=w_\pm$ to critical points of $\A_{\om(1)},\A_{\om(0)}$ respectively
such that $a=\A_{\om(0)}(w_-)$ and $b=\A_{\om(1)}(w_+)$.
Then the following assertions meet\\
\begin{enumerate}
\item If $a\geq\frac{1}{9}$, then $b\geq\frac{a}{2}$; \\
\item If $b\leq-\frac{1}{9}$, then $a\leq\frac{b}{2}$.
\end{enumerate}
\end{Lemma}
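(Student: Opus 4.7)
I would argue directly from the energy/action identity used in Step~1 of the proof of Proposition~\ref{prop:lagbd2}:
$$b-a \;=\; \A_{\om(1)}(w_+)-\A_{\om(0)}(w_-) \;=\; E(w) + \int_{-\infty}^{\infty} \dot\A_{\om(s)}(w(s))\,ds.$$
Since $E(w)\geq 0$, this immediately gives
$$a-b \;\leq\; \Bigl|\int_{-\infty}^{\infty}\dot\A_{\om(s)}(w(s))\,ds\Bigr|,$$
so everything reduces to bounding the perturbation integral by $a/2$ in case (1) and by $|b|/2$ in case (2), once the isoperimetric constant $C$ is taken small enough.

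\textbf{Bounding the perturbation term.} Using Remark~\ref{rmk:lip_pro} (the \emph{linear} isoperimetric inequality, available because $u_\sigma\sim 1$) pointwise in $s$, and substituting the first component of \eqref{eqn:rfhgrd} to replace $\p_t v$ by $J_{t,s}\p_s v+\eta X_H^{\om_s}$, the same computation as in Step~1 of Proposition~\ref{prop:lagbd2} yields
$$\Bigl|\int\dot\A_{\om(s)}(w)\,ds\Bigr| \;\leq\; 2C\,E(w)+4C+2C\,\|\eta\|_\infty\, c''.$$
Feeding this back into the energy identity and absorbing the $E(w)$-term (possible once $C\leq 1/4$) gives $E(w)\leq 2(b-a)+8C+4Cc''\|\eta\|_\infty$, and finally applying the $L^\infty$-bound on $\eta$ from Proposition~\ref{prop:lagbd2} (together with Remark~\ref{rmk:constconti}), which is linear in $\max\{|a|,|b|\}$, produces an estimate of the shape
$$\Bigl|\int\dot\A_{\om(s)}(w)\,ds\Bigr| \;\leq\; C\bigl(\alpha\max\{|a|,|b|\}+\beta\bigr)$$
with constants $\alpha,\beta$ depending only on $H$, $c'$, $c''$, and the virtual restricted contact data, but \emph{not} on $C$.

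\textbf{Concluding the two inequalities.} For part (1), assume $a\geq 1/9$ and suppose toward contradiction that $b<a/2$. If $|b|\leq a$, the displayed bound reads $|\int\dot\A|\leq C(\alpha a+\beta)$; shrinking $C$ this can be made $\leq a/2$, contradicting $a-b>a/2$. If on the other hand $|b|>a$, then $b<-a\leq -1/9$ and $a-b>a+|b|>|b|$, so the estimate forces $C(\alpha|b|+\beta)\geq|b|$, which is again impossible once $C$ is small. Hence $b\geq a/2$. Part~(2) is completely symmetric: reversing the flow parameter $s\mapsto -s$ and interchanging $(\om_0,w_-)\leftrightarrow(\om_1,w_+)$ leaves the framework unchanged and swaps the roles of $a$ and $b$ (together with signs), so the analogous estimate gives $a\leq b/2$ when $b\leq -1/9$.

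\textbf{Main obstacle.} The one genuinely subtle point is the self-referential character of the estimate: the $L^\infty$-bound on $\eta$ supplied by Proposition~\ref{prop:lagbd2} already involves $\max\{|a|,|b|\}$, so one cannot bound $b$ in terms of $a$ by a single pass. The case analysis in the previous paragraph is precisely what breaks this bootstrap — the regime $|b|\gg|a|$ that would make the $\|\eta\|_\infty$-estimate useless is the very regime excluded by the bound on the perturbation once the isoperimetric constant is chosen small enough. This is also where the specific threshold $1/9$ (coming from the interplay with the cut-off Hamiltonian $H=H_{U,k,\xi}$ and the constants in Proposition~\ref{prop:lagbd}) enters the numerology.
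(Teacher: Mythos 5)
Your proposal is correct and follows essentially the same route as the paper: the energy identity, the $L^\infty$-bound on $\eta$ from Proposition~\ref{prop:lagbd2}, and the linear isoperimetric inequality combine to give exactly the type of bound you describe, and the two-case analysis (the regime $|b|\leq a$ where shrinking $C$ wins outright, versus the regime $|b|>a$ which is self-destructive) is precisely what drives the paper's argument as well. The only cosmetic differences are that the paper records explicit numerical thresholds on $C$ in (\ref{eqn:Ccond2}) yielding the clean inequality (\ref{eqn:qaes}), and obtains part~(2) from the algebraic symmetry of that inequality under $(a,b)\mapsto(-b,-a)$ rather than from a flow-reversal argument.
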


\begin{proof}
By the previous proposition, we obtained the following uniform bound of $\eta$
\bean
\|\eta\|_\infty&\leq2K''\\
&=2\overline{c}K+8\overline{c}C\Delta+32\overline{c}C^2+8\overline cC+2\overline{c}+\frac{4\|H\|_{\infty}\Delta}{\epsilon^2}+\frac{16C\|H\|_{\infty}}{\epsilon^2}.
\eea
Moreover, since $E(w)\geq0$ we obtain from (\ref{eqn:e_bd2}) the inequality
\beqn
b\geq a-4C-2C\|\eta\|_\infty c''.
\eeq
By taking a small isoperimetric constant $C$ satisfying
\bea\label{eqn:Ccond2}
C\overline c c''\leq\frac{1}{32}; \\
C\left(2\overline cC+\frac{\|H\|_\infty}{\epsilon^2}\right)c''\leq\frac{1}{128}; \\
C\left(1+16\overline c c''C^2+4\overline cc''C+\overline c c''+\frac{8c''C\|H\|_\infty}{\epsilon^2}\right)\leq\frac{1}{144};
\eea
we now get
\bea\label{eqn:qaes}
b
&\geq a-4C-2C\|\eta\|_\infty c'' \\
&\geq a-4C-2C\left(2\overline{c}K+8\overline{c}C\Delta+32\overline{c}C^2+8\overline cC+2\overline{c}+\frac{4\|H\|_{\infty}\Delta}{\epsilon^2}+\frac{16C\|H\|_{\infty}}{\epsilon^2}\right) c'' \\
&= a-4C\overline c c''K-8C\left(2\overline cC+\frac{\|H\|_\infty}{\epsilon^2}\right)c''\Delta \\
&\ \; \; \ \ \ -4C\left(1+16\overline c c''C^2+4\overline cc''C+\overline c c''+\frac{8c''C\|H\|_\infty}{\epsilon^2}\right) \\
&\geq a-\frac{1}{8}K-\frac{1}{16}(b-a)-\frac{1}{36},
\eea
where $K=\max\{-a,b\}$.
To prove the assertion (1), we first consider the case
\[
|b|\leq a,\qquad a\geq\frac{1}{9}.
\]
In this case, we estimate
\[
b\geq a-\frac{1}{8}a-\frac{1}{8}a-\frac{1}{36}=\frac{3}{4}a-\frac{1}{36}\geq\frac{a}{2}.
\]
Hence to prove the assertion (1), it suffices to exclude the case
\[
-b\geq a\geq\frac{1}{9}.
\]
But in this case, (\ref{eqn:qaes}) leads to a contradiction in the following way
\[
b\geq\frac{1}{9}+\frac{1}{72}-\frac{1}{16}(b-a)-\frac{1}{36}\geq-\frac{1}{16}(b-a)>0.
\]
This proves the first assertion.
To prove the assertion (2), we set
\[
b'=-a,\qquad a'=-b.
\]
We note that if (\ref{eqn:qaes}) holds for $a$ and $b$, it also holds for $b'$ and $a'$.
Hence we get from the assertion (1) the implication
\[
-b\geq\frac{1}{9} \Longrightarrow -a\geq-\frac{b}{2}
\]
which is equivalent to the assertion (2). This finishes the proof of the Lemma.
\end{proof}

\begin{proof}[{\bf Proof of Theorem \ref{thm:rfhcon}}]
We now construct the continuation homomorphism
\[
\Psi_{\om_0}^{\om_{\sigma}}:\RFHb(\Sigma_k,\om_0=dp\wedge dq)\to\RFHb(\Sigma_k,\om_{\sigma}=dp\wedge dq+\tau^*\sigma)
\]
for $\om_0,\om_{\sigma}\in\Om^{\Mp}_{\rm reg}(\Sigma_k)$.
Similar as in Theorem \ref{thm:continuation},
we first subdivide
\[
\om_s=\om_0+s(\om_\sigma-\om_0)
\]
into small pieces. We first assume that we can find a subdivision
$\{\om^i\}_{i=0}^{N}$ of $\om_s$ satisfying
\begin{itemize}
\item $\om^i=\om_0+d(i)\tau^*\sigma$, where $0=d(0)<d(1)<\cdots <d(N)=1$;
\item $\A_{H,\om^i}:\L\times\R\to\R$ is Morse-Bott, $\forall i=0,1,\dots,N$;
\item $C(M,g,(d(i+1)-d(i))\sigma,v_\nu)$ satisfies (\ref{eqn:Ccond}),\ (\ref{eqn:Ccond2}), $\forall i=0,1,\dots,N-1$.
\end{itemize}
Let $\om_s^i=\om^i+\beta(s)(\om^{i+1}-\om^i)$ be a homotopy between $\om^i$ and $\om^{i+1}$.
First we construct the following continuation map
\[
\widetilde\Psi_{\om^i}^{\om^{i+1}}:\RFHb(\Sigma_k,\om^i)\to\RFHb(\Sigma_k,\om^{i+1}).
\]
Since the action functional $\A_{H,\om^i}$ is Morse-Bott, the
construction is given by counting gradient flow lines with cascades
as in the Morse-Bott homology. Let us choose Morse functions $h^i$
on $\Crit(\A_{H,\om^i})$. We then define a map
\[
\Psi_{\om^i\ \ *}^{\om^{i+1}}:\RFCb_*(\Sigma_k,\om^i)\to\RFCb_*(\Sigma_k,\om^{i+1})
\]
given by
\beqn
\Psi_{\om^i}^{\om^{i+1}}(w_-)=\sum_{\mu(w_+)=\mu(w_-)}\#_2\M_{\om^i}^{\om^{i+1}}(w_-,w_+)w_+,
\eeq
where $w_-\in\Crit(h^i)$, $w_+\in\Crit(h^{i+1})$ and
$\#_2$ denotes the $\Z_2$-counting.
Here,
\bean
\widehat\M_{\om^i,m}^{\om^{i+1}}(w_-,w_+)
&=\{w\ |\ w \text{ is a flow line with }m\text{-cascades from }w_-\text{ to }w_+\};\\
\M_{\om^i,m}^{\om^{i+1}}(w_-,w_+)&=\widehat\M_{\om^i,m}^{\om^{i+1}}(w_-,w_+)/\R^m;\\
\M_{\om^i}^{\om^{i+1}}(w_-,w_+)&=\bigcup_{m\in\N_0}\M_{\om^i,m}^{\om^{i+1}}(w_-,w_+).
\eea

The main issue of this construction is also the uniform bound of
$E(w)$. As in the Morse-Bott homology situation, it suffices to
check that each gradient flow line has a uniform energy bound. For
this reason, we now only consider the following uniform energy
bound. Let
\[
w'=(v',\eta')\in C^{\infty}(\R\times S^1,T^*N)\times C^\infty(\R,\R)
\]
be a gradient flow line of
\[
\A_{\om_s^i}(v,\eta)=\int_Z\overline{v}^*\om_s^i-\eta\int_0^1H(x(t))dt
\]
i.e. a solution of
\beqn\label{eqn:c1} \left.
\begin{array}{cc}
\p_sv+J_{t,s}(v)\left(\p_tv-\eta X_H^{\om_s^i}(v) \right)=0 \\
\p_s\eta+\int_0^1H(x(t))dt=0
\end{array}
\right\} \eeq
\beqn\label{eqn:c2} \lim_{s \to
-\infty}w'(s)=w'_-\in\Crit\A_{\om^i}, \qquad \lim_{s \to
\infty}w'(s)=w'_+\in\Crit \A_{\om^{i+1}}. \eeq To achieve a uniform
energy bound of $w'$, let us recall the equation (\ref{eqn:e_bd}) in
Proposition \ref{prop:lagbd2}
\beqn\label{eqn:ees}
E(w')\leq\A_{\om^{i+1}}(w'_+)-\A_{\om^i}(w'_-)+2CE(w')+4C+2C\|\eta'\|_{\infty}c''.
\eeq
Since the isoperimetric constant $C$ satisfies the condition (\ref{eqn:Ccond}),
we get the following uniform bound of the
Lagrangian multiplier $\eta'$ \beqn\label{eqn:etabd}
\|\eta'\|_{\infty}\leq 2K'' \eeq and \bean
E(w')&\leq2\Delta+8C+4C\|\eta'\|_{\infty}c'' \\
&\leq2\Delta+8C+8Cc''K'', \eea where the coefficients are the same
as in Proposition \ref{prop:lagbd2}. Hence we conclude $E(w')$ is
uniformly bounded.

Now, by virtue of Lemma \ref{lem:awin}, we obtain for $a\leq-\frac{1}{9}$ and $b\geq\frac{1}{9}$ maps
\[
\Psi_{\om_i}^{\om_{i+1}(a,b)}:\RFCb^{(\frac{a}{2},b)}(\Sigma_k,\om_i)\to\RFCb^{(a,\frac{b}{2})}(\Sigma_k,\om_{i+1})
\]
defined by counting gradient flow lines of the time dependent
Rabinowitz action functional. Since the continuation map
$\Psi_{\om_i}^{\om_{i+1}(a,b)}$ commutes with the boundary
operators, this induces the following homomorphism on homology
level.
\[
\widetilde\Psi_{\om_i}^{\om_{i+1}(a,b)}:\RFHb^{(\frac{a}{2},b)}(\Sigma_k,\om_i)\to\RFHb^{(a,\frac{b}{2})}(\Sigma_k,\om_{i+1})
\]
By taking the inverse and direct limit as follows
\[
\RFHb_*(\Sigma_k,\om_i)=\lim_{b\to\infty}\lim_{a\to-\infty}\RFHb_*^{(a,b)}(\Sigma_k,\om_i),
\]
we obtain
\[
\widetilde\Psi_{\om_i}^{\om_{i+1}}:\RFHb(\Sigma_k,\om_i)\to\RFHb(\Sigma_k,\om_{i+1}).
\]
Similar in usual Floer homology, we can define the continuation
homomorphism by juxtaposition
\[
\widetilde\Psi_{\om_0}^{\om_\sigma}:\RFHb(\Sigma_k,\om_0)\to\RFHb(\Sigma_k,\om_\sigma)
\]
\[
\widetilde\Psi_{\om_0}^{\om_\sigma}=\widetilde\Psi_{\om^{N-1}}^{\om^N}\circ\cdots\circ\widetilde\Psi_{\om^1}^{\om^2}\circ\widetilde\Psi_{\om^0}^{\om^1}.
\]
In a similar way, we can construct \beqn
\widetilde\Psi_{\om_\sigma}^{\om_0}:\RFHb(\Sigma_k,\om_\sigma)\to\RFHb(\Sigma_k,\om_0),
\eeq by following the homotopy backwards. By a
homotopy-of-homotopies argument, we conclude
$\widetilde\Psi_{\om_\sigma}^{\om_0}\circ\widetilde\Psi_{\om_0}^{\om_\sigma}=\id_{\RFHb(\Sigma_k,\om_0)}$
and
$\widetilde\Psi^{\om_\sigma}_{\om_0}\circ\widetilde\Psi^{\om_0}_{\om_\sigma}=\id_{\RFHb(\Sigma_k,\om_\sigma)}$.
Therefore $\widetilde\Psi_{\om_0}^{\om_1}$ is an isomorphism with
inverse $\widetilde\Psi^{\om_0}_{\om_1}$.

It remains to discuss the case where the corresponding subdivision
$\{\omega^i\}_{i=0}^N$ of $\omega_s$ does not exist. The issue is
the assertion about Morse-Bott which in higher dimensions is not
known to hold for generic choice of the magnetic field. In this case
we can perturb Rabinowitz action functional by an additional
non-physical perturbation already used in \cite{CFP09}. Namely
choose a compactly supported time-dependent Hamiltonian $F \in
C^\infty(T^*N \times S^1)$ and consider the perturbed Rabinowitz
action functional $\mathcal{A}_\omega^F \in
\mathscr{L}\times\mathbb{R} \to \mathbb{R}$ defined by
$$\mathcal{A}_\omega^F(v,\eta)=\mathcal{A}_\omega(v,\eta)+\int_0^1F(v(t),t)dt.$$
For generic perturbation $F$ the perturbed Rabinowitz action
functional is Morse, see \cite{CFP09}. Moreover, the difference
$\mathcal{A}_\omega-\mathcal{A}_\omega^F$ is uniformly bounded by
the Hofer norm of the perturbation $F$. By choosing a small enough
perturbation all our previous estimates hold up to an arbitrarily
small error term. This procedure allows us to construct a
continuation homomorphism between the two Rabinowitz Floer
homologies in the unlikely case where we cannot directly interpolate
between the two symplectic forms.
\end{proof}


\appendix

\section{Cofilling function in Example \ref{ex:sol}}\label{app:sol}

In this appendix, we will give more detailed explanation about Example \ref{ex:sol}.
We follow the idea of geometric group theory listed in \cite{ECH}.

\begin{proof}[Proof of Example \ref{ex:sol}]
Let us recall the 3-manifold $M$ in Example \ref{ex:sol}, fibered over $S^1$ with fiber $\T^2$ with hyperbolic monodromy
\[
A=\begin{pmatrix}
2&1\\
1&1
\end{pmatrix}.
\]
Let $y,z$ be the coordinates of the fiber torus, then $\sigma =dy \wedge dz$ is a well-defined 2-form on $M$.
In order to show the exponential growth of the cofilling function $u_\sigma$,
we now consider the group $G$ generated by the following action on $\R^3$:
\bean
\alpha :&\  (x,y,z)\mapsto \big(x+1,A(y,z)\big); \\
\beta  :&\  (x,y,z)\mapsto (x,y+1,z); \\
\gamma :&\  (x,y,z)\mapsto (x,y,z+1).
\eea

Note that the quotient space $G\setminus\R^3$ is the manifold $M$
with the universal covering map $p:\R^3\to M$.
Since there exists a Riemannian metric $g$ on $M$,
we get the pullback metric $\widetilde g=p^*g$ on $\R^3=\widetilde M$
which is invariant under the action of the group $G$.
Especially we choose a metric $g$ satisfying the condition that $\widetilde g$
has length 1 for the following edges in $\widetilde M$,
\bean
(x,y,z)&\sim\big(x+1,A(y,z)\big); \\
(x,y,z)&\sim(x,y+1,z); \\
(x,y,z)&\sim(x,y,z+1);
\eea
where $p\sim q$ means the straight line connecting $p$ and $q$ in Euclidean metric.

Consider the word
\[
w_n=(\alpha^n\gamma^{-1}\alpha^{-n})(\alpha^{-n}\beta^{-1}\alpha^n)(\alpha^n\gamma\alpha^{-n})(\alpha^{-n}\beta\alpha^n)
\]
which represents the identity.
Then $w_n$ is regarded as a contractible path in the universal cover $(\R^3,\widetilde g)$,
travels around the following points with straight lines:
\[
(0,0,0),\ (n,0,0),\ (n,e_1),\ (0,A^{-n} e_1),\ (-n,A^{-2n} e_1),
\]
\[
(-n,A^{-2n} e_1+e_2),\ (0,A^{-n} e_1+A^n e_2),\ (n,e_1+A^{2n} e_2),
\]
\[
(n,A^{2n} e_2),\ (0,A^{n} e_2),\ (-n,e_2),\ (-n,0,0),\ (0,0,0).
\]
Note that the length of $w_n$ grows linearly as $n\to\infty$ \wrt the metric $\widetilde g$.

We claim that any bounding disk $D_n$ with $\partial D_n=w_n$
has area at least $k\lambda^{2n}$,
where $k$ is a positive constant and $\lambda$ is the eigenvalue of $A$ bigger than 1.
To show this, consider the following projection
\bean
\pi:\R^3\to\R^2,\qquad
(x,y,z)\mapsto(y,z).
\eea
Let $\sigma'$ be the area form $dy\wedge dz\in\Om^2(\R^2)$,
then we get $p^*\sigma=\pi^*\sigma'$.
Note that this form is preserved by the action of $G$ and we obtain
\[
\int_{p(D_n)}\sigma=\int_{D_n}p^*\sigma=\int_{D_n}\pi^*\sigma'=\int_{\pi(D_n)}\sigma'.
\]
The projection $\pi(D_n)$ contains the parallelogram with vertices
$0, A^{-n} e_1, A^{-n} e_1+A^n e_2$ and $A^n e_2$ in the $yz$-plane,
and the area of this one is approximately $k\lambda^{2n}$ for large $n$.

Now suppose that there exists a primitive $\theta\in\Om^1(\widetilde M)$ of $p^*\sigma$
with subexponential growth $f(n)=\sup_{z\in B_0(n)}|\theta_z|_{\widetilde p}$.
Then we deduce the following contradiction,
\[
\epsilon\lambda^{2n}\leq\int_{p(D_n)}\sigma=\int_{D_n}p^*\sigma=\int_{\partial D_n}\theta
\leq f(4n+2)\int_{w_n}1=f(4n+2)\cdot (8n+4)
\]
as $n\to\infty$ for small enough $\epsilon>0$.
If we take $ydz\in\Om^1(\widetilde M)$ as a primitive of $p^*\sigma$
then by direct calculation $\int_{p_n}ydz\leq K\lambda^{2n}$ as $n\to\infty$,
where $p_n:[0,n]\to\widetilde M$ is a length $n$ path with $p(0)=0$ and $K$ large enough.
This implies that $\sup_{q\in B_0(n)}|ydz_q|_{\widetilde p}$ has at most exponential growth and
we conclude that $u_{\sigma}(s)\sim \exp(s)$.

\end{proof}

%
%
\end{document}